 \theoremstyle{theorem}
 \newtheorem{thm}{Theorem}[section]
 \newtheorem{lem}{Lemma}[section]
 \newtheorem{defn}{Definition}[section]
\newcommand{\cA}{{\mathcal A}}
\newcommand{\cM}{{\mathcal M}}
\newcommand{\cR}{{\mathcal R}}
\newcommand{\cV}{{\mathcal V}}
\def\Z{\mathbb{Z}}
\def\R{\mathbb{R}}
\def\1{\mathbb{1}}
\def\bc{\begin{center}}
\def\ec{\end{center}}
\def\be{\begin{equation}}
\def\ee{\end{equation}}
\def\ba{\begin{array}}
\def\ea{\end{array}}
\def\benu{\begin{enumerate}}
\def\eenu{\end{enumerate}}
\def\bt{\begin{theorem}}
\def\et{\end{theorem}}
\def\bl{\begin{lemma}}
\def\el{\end{lemma}}
\def\bco{\begin{corollary}}
\def\eco{\end{corollary}}
\def\br{\begin{remark}}
\def\er{\end{remark}}
\def\bd{\begin{definition}}
\def\ed{\end{definition}}
\def\bp{\begin{proposition}}
\def\ep{\end{proposition}}
\def\bo{\begin{proof}}
\def\eo{\end{proof}}
\def\bx{\begin{example}}
\def\ex{\end{example}}
\def\al{\alpha}
\def\lam{\lambda} 
\def\ve{\varepsilon}
\def\Sig{\Sigma}
\def\w{\omega}
\def\gam{\gamma}\def\leq{\leqslant}\def\geq{\geqslant}
\def\~{\widetilde}
\def\A{\forall}
\def\ra{\rightarrow}
\def\8{\infty}
\def\mb{\mbox}
\def\sm{\setminus}
\def\ss{\subset}
\def\emp{\emptyset}
\def\.{\cdot}
\def\Hs{\hspace{0.8cm}}
\def\hs{\hspace{0.4cm}}
\def\Vs{\vskip10pt}
\def\vs{\vskip5pt}
\def\[{\left[}
\def\]{\right]}
\def\({\left(}
\def\){\right)}
\title{A Remark on Attractor Bifurcation
  \footnote{This paper is supported by the grant of China
(11871368, 11801190)}}
\author{
\small{Chunqiu Li$^a$} \footnote{Corresponding author. Email: lichunqiu@wzu.edu.cn},\hs {Desheng Li$^b$
}\footnote{Email: lidsmath@tju.edu.cn},\hs {Jintao Wang$^c$} \footnote{Email: wangjt@hust.edu.cn}  ~\\\\
\small\it  $^{a}$Department of Mathematics, Wenzhou University, \\
\small\it Wenzhou, Zhejiang, 325035, P. R. China~\\
\small\it  $^b$School of Mathematics,  Tianjin University\\
\small\it Tianjin 300072, P. R. China~\\
\small\it  $^{c}$Center for Mathematical Sciences, Huazhong University of \\
\small\it Science and Technology,\,Wuhan 430074, P. R. China
}
\date{\small\today}
\begin{document}

\maketitle
\begin{abstract}\baselineskip 14pt
In this paper we present some local dynamic bifurcation results in
terms of invariant sets of nonlinear evolution equations. We show
that if the trivial solution is an isolated invariant set of the
system at the critical value $\lam=\lam_0$, then either there exists
a one-sided neighborhood $I^-$ of $\lambda_0$ such that for each
$\lambda\in I^-$, the system bifurcates from the trivial solution to
an isolated nonempty compact invariant set $K_\lambda$ with
$0\not\in K_\lambda$, or there is a one-sided neighborhood $I^+$ of
$\lam_0$ such that the system undergoes an attractor bifurcation for
$\lam\in I^+$ from $(0,\lam_0)$. Then we give a modified version of the attractor bifurcation theorem. Finally, we consider the classical Swift-Hohenberg equation and illustrate how to
apply our results to a concrete evolution equation.\\\\
\textbf{Keywords}: Invariant-set bifurcation; attractor bifurcation; Nonlinear evolution equation
 \\
\textbf{MSC2010}: 37B30, 37G35, 35B32
\end{abstract}


\setcounter {equation}{0}
\section{Introduction}
\hs\,\, This note  is concerned with the dynamic bifurcation of the nonlinear evolution equation
\be\label{e1.1}
u_t+Au=f_\lam(u)
\ee
 on a Banach space $X$, where $A: X^{\alpha}\rightarrow X$ is a
sectorial operator with compact resolvent for some
$0\leq\alpha<1$, $\lam\in\R$ is the bifurcation parameter, and
$f_{\lambda}(u)$ is a
locally Lipschitz continuous mapping from $X^{\alpha}\times \mathbb{R}$ to $X$. We also assume that
$f_\lam(0)=0$ for $\lam\in\R$ and
$$
  f_{\lambda}(u)=Df_{\lambda}(0)u+g_{\lambda}(u)
$$
with $Df_{\lambda}(u)$ continuous in $(u,\lambda)$, and that
$g_{\lambda}(u)=o(\|u\|_{X^{\alpha}})$ as
$\|u\|_{X^\alpha}\rightarrow 0$. So $u=0$ is always a trivial
solution of \eqref{e1.1} for each $\lambda\in \mathbb{R}$.

A quite fundamental result in the dynamic bifurcation theory is the
well-known Hopf bifurcation theorem \cite{H2,K,LWZH,MM}, which
concerns the bifurcation of a closed orbit from an equilibrium
point. The Hopf bifurcation theorem plays an important role in the
study of nonlinear dynamics, and has been fully developed in the
last century. However, it only applies to the case where there are
exactly a pair of conjugate eigenvalues of the linearized equation
crossing the imaginary axis. In practice, the linearized equation
near the equilibrium may have more than two eigenvalues  crossing
the imaginary axis. To deal with this case, a general dynamic
bifurcation theory needs to be developed, and this can be performed
in the context  of invariant sets \cite{BLL,CLR,CLR1,CH,LLZ,LW,R,W}, etc.


A particular but important case of  the invariant-set bifurcation is
the so-called attractor bifurcation, which was systematically
studied by Ma and Wang \cite{HMW,MW,MW1,MW2} and was further developed into a dynamic transition
theory \cite{MW3}. Sanjurjo
\cite{S} also addressed the attractor bifurcation theory from the
point of view of topology. Roughly speaking, the attractor
bifurcation theory states that   if the trivial solution  $0$ of
\eqref{e1.1} changes from an attractor to a repeller as  $\lambda$
crosses certain critical value $\lambda_0$, then the system
bifurcates  an attractor from the trivial solution. But in \cite{MW}
etc. it was assumed that the trivial solution is an attractor of the
system on the local center manifold when $\lambda=\lambda_0$.
Because the system is degenerate when it is restricted on the center
manifold, the verification of the condition that the trivial
solution is an attractor is often not an easy task.

In this paper we give a modified version of the
attractor bifurcation theorem in \cite{MW}, which drops the
additional condition mentioned above and makes the theorem more
efficient in applications. Specifically, let $\Phi_\lam$ be the
local semiflow generated by \eqref{e1.1}, and let $\cA_0=\{0\}$.
Suppose $\cA_0$ is an attractor of $\Phi_\lam$ for each
$\lam<\lam_0$, and that  there is at least one eigenvalue of the
linearized equation of (\ref{e1.1}) near the trivial solution
crossing the imaginary axis at the critical value
$\lambda=\lambda_0$. We prove that if $\mathcal{A}_0$ is an isolated
invariant set of $\Phi_{\lam_0}$, then there exists $\ve_1>0$ such
that the system $\Phi_\lambda$   bifurcates an isolated invariant
set $K_\lam$ with $0\not\in K_\lambda$ for each $\lam\in
[\lam_0-\varepsilon_1,\,\lam_0)$, or it bifurcates an attractor for
each  $\lam\in (\lam_0,\lam_0+\ve_1]$. In particular, if $\cA_0$ is
the global attractor of $\Phi_\lam$ for each $\lam<\lam_0$, then it
immediately follows that the system undergoes an attractor
bifurcation on $(\lam_0,\lam_0+\ve_1]$. Note that we do not assume
the trivial solution to be  an attractor of the system at
$\lambda=\lambda_0$.

This work is organized as follows. In Section 2 we introduce some
basic concepts and results concerning invariant sets. In Section 3
we first show our main results and then give the proofs of the
results. Finally we consider the classical Swift-Hohenberg equation
to illustrate our results.

\setcounter {equation}{0}
\section{Preliminaries}

\hs \,\, In this section we introduce some basic concepts
concerning local semiflows. First, let $X$ be a
complete metric space with metric $d(\cdot,\cdot)$.

\begin{defn}\label{defn2.1}
A local semiflow $\Phi$ defined on $X$ is a continuous map from an open
set $\mathcal{D}(\Phi)\subset \mathbb{R}^+\times X$ to $X$ and
satisfies the following properties:

{\rm(1)} For each $x\in X$, there exists
$0<T_x\leq\infty$ such that
$$(t,x)\in\mathcal{D}(\Phi)\Longleftrightarrow t\in [0,T_x).$$

{\rm(2)} $\Phi(0,\cdot)={\rm id}_X$, and
$$\Phi(t+s,x)=\Phi(t,\Phi(s,x))$$

for every $x\in X$ and $t,s \geq 0$ with $t+s\leq T_x$.
\vs
The number $T_x$ is called the escape time of $\Phi(t,x)$.
\end{defn}

For simplicity, we usually rewrite $\Phi(t,x)$ as $\Phi(t)x$.

Let $I\subset\R$ be an interval. A {\it trajectory} (or {\em solution}) of $\Phi$ on $I$
is a continuous mapping $\gam:I\ra X$ with
$$
  \gam(t)=\Phi(t-s)\gam(s),\hs \forall t,s\in I,\, t\geq s.
  $$
If $I=\R$, the trajectory $\gam$ is called  a {\it full trajectory}.

The {\em $\omega$-limit set} $\w(\gam)$ and {\em $\al$-limit set} $\al(\gam)$ of a full trajectory $\gam$ are defined as
$$\ba{ll}
\w(\gamma)=\{y\in X:\,\,\,\,\mb{there exists }  t_n\ra \8 \mb{ such that }\gamma(t_n)\ra y\},\ea$$
$$
\ba{ll}
\al(\gamma)=\{y\in X:\,\,\,\,\mb{there exists }  t_n\ra -\8 \mb{ such that }\gamma(t_n)\ra y\}.\ea
$$
\begin{defn}\label{defn2.2}
Let $N\subset X$. We say that $\Phi$ does not explode in $N$, if we
can infer $T_x=\infty$ from $\Phi([0,T_x))x\subset N$.

\end{defn}

\begin{defn}\label{defn2.3}\cite{R}
$N\subset X$ is said to be admissible, if for every sequences $x_n\in
N$ and $t_n\rightarrow\infty$ with $\Phi([0,t_n])x_n\subset N$ for
all $n$, the sequence $\Phi(t_n)x_n$ has a convergent subsequence.

$N$ is said to be strongly admissible, if it is admissible and
moreover, $\Phi$ does not explode in $N$.
\end{defn}

\begin{defn}\label{defn2.4}
$\Phi$ is said to be asymptotically compact on $X$, if each bounded
subset $B$ of $X$ is strongly admissible.
\end{defn}

\vs
From now on, we always assume that

\vs
$(\mathbf{AC})$ $\Phi$ is asymptotically compact on $X$.

\begin{defn}\label{defn2.5}
A set $\mathcal{A}\subset X$ is said to be positively invariant
(resp., invariant) for $\Phi$, if
$\Phi(t)\mathcal{A}\subset\mathcal{A}$\,(resp.,
$\Phi(t)\mathcal{A}=\mathcal{A}$) for each $t\geq 0$. An invariant
set $\mathcal{A}$ is called an attractor of $\Phi$, if it is compact
and attracts a neighborhood $U$ of itself, that is,
$$
  \lim\limits_{t\rightarrow\infty}\mathrm{dist}_H(\Phi(t)U,\mathcal{A})=0.
  $$
The attraction basin of $\mathcal{A}$, denoted by
$\Omega(\mathcal{A})$, is defined as
$$
  \Omega(\mathcal{A})=\{x\in X:
  \lim\limits_{t\rightarrow\infty}\mathrm{dist}_H(\Phi(t)x,\mathcal{A})=0\},
$$
where $\mathrm{dist}_H$ denotes the Hausdorff semi-distance. %
\end{defn}

Suppose $M$ is a compact invariant set. Then the restriction
$\Phi_M$ of $\Phi$ on $M$ is also a semiflow. A compact set
$\mathcal{A}\subset M$ is called an {\it attractor} of $\Phi$ in
$M$, which means that $\mathcal{A}$ is an attractor of $\Phi_M$ in
$M$.

Let $\mathcal{A}$ be an attractor of $\Phi$ in $M$. Define
$$
  \mathcal{R}=\{ x\in M: \omega(x)\cap \mathcal{A}=\emptyset\}.
  $$
Then $\mathcal{R}$ is called the {\it repeller} of $\Phi$ in $M$
dual to $\mathcal{A}$, and $(\mathcal{A},\mathcal{R})$ is called an
{\it attractor-repeller pair} of $\Phi$ in $M$ .

\begin{lem}\label{l2.1}
Let $\mathcal{R}\subset M$ be a nonempty compact invariant set.
Suppose that there exists an open neighborhood $W$ of $\mathcal{R}$
in $M$ such that for each $x\in W\setminus \mathcal{R}$ and each
complete trajectory $\gamma$ in $M$ through $x$, one has
$$
  \alpha(\gamma)\subset \mathcal{R}, \hs\omega(\gamma)\cap W=\emptyset.
  $$
Denote $\mathcal{A}$ the maximal compact invariant set in
$M\setminus W$. Then $(\mathcal{A},\mathcal{R})$ is an
attractor-repeller pair.
\end{lem}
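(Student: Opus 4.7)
The plan is to verify the two defining conditions of an attractor-repeller pair: (A) $\mathcal{A}$ is an attractor of $\Phi$ in $M$; (B) $\mathcal{R}=\{x\in M:\omega(x)\cap\mathcal{A}=\emptyset\}$. Compactness and invariance of $\mathcal{A}$ itself I would take as routine consequences of the compactness of $M\setminus W$ together with the continuity of $\Phi$ (the closure of the union of all compact invariant subsets of $M\setminus W$ is again compact invariant). The whole argument then rests on one structural observation, which I will call the \emph{key claim}: every compact invariant set $K\subset M$ with $K\cap\mathcal{R}=\emptyset$ is contained in $\mathcal{A}$. To establish it I would pick $y\in K\cap W$ (if any); since $K\cap\mathcal{R}=\emptyset$, in fact $y\in W\setminus\mathcal{R}$, and invariance of $K$ supplies a complete trajectory $\gamma$ through $y$ inside $K$. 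The hypothesis forces $\alpha(\gamma)\subset\mathcal{R}$, while closedness gives $\alpha(\gamma)\subset K$; nonemptiness of $\alpha(\gamma)$ in compact $M$ then contradicts $K\cap\mathcal{R}=\emptyset$. Hence $K\subset M\setminus W$, and maximality closes the claim.

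Using the key claim I would verify (B). The inclusion $\mathcal{R}\subset\{x:\omega(x)\cap\mathcal{A}=\emptyset\}$ is immediate from $\mathcal{A}\cap\mathcal{R}\subset (M\setminus W)\cap W=\emptyset$ and the forward invariance of $\mathcal{R}$. For the converse, assume $\omega(x)\cap\mathcal{A}=\emptyset$; a case analysis on an arbitrary $y\in\omega(x)$ (whether its forward orbit visits $W\setminus\mathcal{R}$, remains in $M\setminus W$ for all time, or lands in $\mathcal{R}$) combined with repeated applications of the key claim to the $\omega$-limit of that orbit forces $\omega(x)\subset\mathcal{R}$. To promote $\omega(x)\subset\mathcal{R}$ to $x\in\mathcal{R}$, I would introduce the first-entry time $t_{0}=\inf\{t\geq 0:\Phi(t)x\in\mathcal{R}\}$. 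If $t_{0}>0$, then since $\Phi(t_{0})x\in\mathcal{R}\subset W$ and $W$ is open, some $t<t_{0}$ satisfies $\Phi(t)x\in W\setminus\mathcal{R}$; applying the hypothesis to $\Phi(t)x$ gives $\omega(x)=\omega(\Phi(t)x)$ disjoint from $W$, contradicting $\omega(x)\subset\mathcal{R}\subset W$. So $t_{0}=0$ and $x\in\mathcal{R}$.

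For (A) I would pick an open neighborhood $V$ of $\mathcal{R}$ in $M$ with $\overline{V}\subset W$ and set $U=M\setminus\overline{V}$, an open neighborhood of $\mathcal{A}$. By $(\mathbf{AC})$ the $\omega$-limit set $\omega(U)$ is compact invariant and attracts $U$, so the key claim reduces (A) to showing $\omega(U)\cap\mathcal{R}=\emptyset$. Suppose for contradiction that $y=\lim_{n}\Phi(t_{n})x_{n}\in\omega(U)\cap\mathcal{R}$ with $x_{n}\in U$ and $t_{n}\to\infty$. I would introduce the last-exit time $\tau_{n}=\sup\{t\in[0,t_{n}]:\Phi(t)x_{n}\notin V\}$; continuity together with the definition of $\tau_{n}$ places $\Phi(\tau_{n})x_{n}\in\partial V$ and keeps $\Phi(t)x_{n}\in V$ for $t\in(\tau_{n},t_{n}]$, and after extraction $\Phi(\tau_{n})x_{n}\to z\in\partial V\subset W\setminus\mathcal{R}$. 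The main obstacle then is to split on $s_{n}:=t_{n}-\tau_{n}$. If $s_{n}\to\sigma<\infty$, continuity of $\Phi$ gives $y=\Phi(\sigma)z\in\mathcal{R}$, so $\omega(z)\subset\mathcal{R}$ by forward invariance, and the first-entry argument from (B) forces $z\in\mathcal{R}$, contradicting $z\in W\setminus\mathcal{R}$. If $s_{n}\to\infty$, then for every fixed $u\geq 0$ one has $\Phi(u)z=\lim_{n}\Phi(u+\tau_{n})x_{n}\in\overline{V}\subset W$, so the entire forward orbit of $z$ lies in $W$ and hence $\omega(z)\cap W\ne\emptyset$, directly contradicting the hypothesis applied to $z$. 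This $s_{n}\to\infty$ subcase, which combines the trapping $\Phi(t)x_{n}\in V$ on $(\tau_{n},t_{n}]$ with pointwise continuity in the initial datum, is where the real work lies.
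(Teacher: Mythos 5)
Your argument is correct in substance, but it takes a genuinely different and more self-contained route than the paper. The paper's proof is very short: it observes that $\mathcal{A}\cap\mathcal{R}=\emptyset$, shows that any complete trajectory $\gamma$ through a point of $M\setminus(\mathcal{A}\cup\mathcal{R})$ must enter $W$ (otherwise $\gamma$ would lie in $\mathcal{A}$ by maximality), concludes from the hypothesis that $\alpha(\gamma)\subset\mathcal{R}$ and $\omega(\gamma)\subset\mathcal{A}$, and then invokes Theorems 1.7 and 1.8 on Morse decompositions in Chapter III of \cite{R} to obtain the attractor-repeller pair. You instead verify the definition from scratch: your key claim together with the $\omega(U)$ and last-exit-time construction does exactly the work that the paper delegates to the cited theorems. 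Your route buys independence from the Morse-decomposition machinery and makes visible where compactness of $M$ and openness of $W$ enter; the paper's buys brevity. Two small points to patch in a final write-up: (i) in promoting $\omega(x)\subset\mathcal{R}$ to $x\in\mathcal{R}$, the set $\{t\geq 0:\Phi(t)x\in\mathcal{R}\}$ could a priori be empty (the orbit may only accumulate on $\mathcal{R}$ without entering it), so you should first note that $\omega(x)\subset\mathcal{R}\subset W$ forces $\Phi(t)x\in W$ for all large $t$, and any such point lying in $W\setminus\mathcal{R}$ would yield $\omega(x)\cap W=\emptyset$ by the hypothesis, a contradiction; this shows the set is nonempty, and closedness of $\mathcal{R}$ makes the infimum attained, after which your argument runs as stated; (ii) the case analysis establishing $\omega(x)\subset\mathcal{R}$ is cleanest via connectedness: a complete trajectory in $\omega(x)$ either meets $W\setminus\mathcal{R}$ (excluded by applying the hypothesis and the key claim to its $\omega$-limit set) or lies in the disjoint closed union $(M\setminus W)\cup\mathcal{R}$ and hence, being connected, entirely in one of the two pieces.
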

\bo
By the definition of $\cA$, it is clear that $\cA$ and $\cR$ are disjointed. Let $x\in M\sm(\cA\cup \cR)$, and let $\gam$ be a complete trajectory in $M$ through $x$. We claim that there exists a $t_0\in \R$ such that $\gam(t_0)\in W$. Indeed, if this was not the case, then one would have $\gam(t)\ss M\sm W$ for all $t\in\R$. Therefore by the definition of $\cA$, we find that $\gam$ is contained in $\cA$. This leads to a contradiction (as $\gam(0)=x\notin\cA$).

Now by the assumption of the lemma, one easily verifies that $$\alpha(\gam)\ss\cR,\hs \w(\gam)\ss \cA,$$ and the conclusion of the lemma follows immediately from Theorems 1.7 and 1.8 on Morse decompositions of invariant sets in \cite{R}, Chap. III.
 \eo
Next, we recall some basic concepts and results concerning the
Conley index theory. One can refer to \cite{C,Mis,R}, etc. for details.

\vs
Let $N,E$ be two closed subsets of $X$. We say that $E$ is an {\it exit
set} of $N$, if it satisfies
\vs
{\rm(1)} $E$ is N-positively invariant, that is, if for any $x\in E$
and $t\geq 0$,
$$\Phi([0,t])x\subset N \Longrightarrow\Phi([0,t])x\subset E;$$

{\rm(2)} For any $x\in N,$ if $\Phi(t_1)x\not\in N$ for some
$t_1>0$, then there exists a $t_0\in[0,t_1]$ such that $\Phi(t_0)x\in
E$.
\vs

A compact invariant set $\mathcal{A}$ of $\Phi$ is said to be
{\it isolated}, if there exists a bounded closed neighborhood $N$ of
$\mathcal{A}$ such that $\mathcal{A}$ is the maximal invariant set
in $N$. Consequently, $N$ is called an {\it isolating neighborhood} of
$\mathcal{A}$.

Let $\mathcal{A}$ be a compact isolated invariant set. A pair of
closed subsets $(N,E)$ is said to be an {\it index pair} of $\mathcal{A}$,
if it satisfies the following conditions:
\vs
{\rm(1)} $N\setminus E$ is an isolating neighborhood of
$\mathcal{A};$
\vs
{\rm(2)} $E$ is an exit set of $N$.

\begin{defn}\label{defn2.8}
Let $(N,E)$ be an index pair of $\mathcal{A}$. Then the homotopy
Conley index of $\mathcal{A}$ is defined to be the homotopy
type $[(N/E,[E])]$ of the pointed space $(N/E,[E])$, denoted by
$h(\Phi,\mathcal{A}).$
\end{defn}


Next we present an important result on the continuation property of
Conley index, which plays an important role in the proof of our
invariant sets bifurcation results. 

Let $\Phi_\lambda$ be a family of semiflows with parameter
$\lambda\in\Lambda$, where $\Lambda$ is a connected compact metric
space. Suppose that $\Phi_\lambda(t)x$ is continuous in
$(t,x,\lambda)$. Define the {\it skew-product flow}
$\widetilde{\Phi}$ of the family $\Phi_\lambda$ on $X\times\Lambda$
by
$$
  \widetilde{\Phi}(t)(x,\lambda)=(\Phi_\lambda(t)x,\lambda), \Hs (x,\lambda)\in
  X\times\Lambda.
  $$

\begin{lem}\label{thm 2.1}\cite{LW}
Let $\widetilde{\Phi}$ satisfy the assumption $(\mathbf{AC})$ on
$X\times\Lambda$. Suppose $\mathcal{M}$ is a compact isolated
invariant set of $\widetilde{\Phi}$. Then
$h(\Phi_\lambda,M_\lambda)$ is constant for $\lambda\in \Lambda$,
where $M_\lambda=\{x:(x,\lambda)\in \mathcal{M}\}$ is the
$\lambda$-section of $\mathcal{M}.$

\end{lem}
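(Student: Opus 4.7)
The plan is to establish local constancy of the map $\lam\mapsto h(\Phi_\lam, M_\lam)$ on $\Lam$ and then invoke the connectedness of $\Lam$ to conclude global constancy. The argument follows the standard continuation strategy for the Conley index, adapted to the skew-product setting.

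First I would verify that each slice $M_\lam$ is itself a compact isolated invariant set of $\Phi_\lam$, so that $h(\Phi_\lam, M_\lam)$ is well defined. Compactness is immediate since $M_\lam$ is the $\lam$-slice of the compact set $\cM$. Invariance under $\Phi_\lam$ follows from the skew-product identity $\widetilde\Phi(t)(x,\lam)=(\Phi_\lam(t)x,\lam)$. For isolation, fix an isolating neighborhood $\widetilde N$ of $\cM$ in $X\X\Lam$ and set $N_\lam=\{x:(x,\lam)\in\widetilde N\}$; any compact invariant set $K\ss N_\lam$ of $\Phi_\lam$ lifts to an invariant set $K\X\{\lam\}\ss\widetilde N$ of $\widetilde\Phi$, which must be contained in $\cM$, so $K\ss M_\lam$.

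Next I would invoke the standard construction (applicable because $\widetilde\Phi$ is asymptotically compact and $\cM$ is isolated, as in \cite{R}) to produce an index pair $(\widetilde N,\widetilde E)$ of $\cM$ in $X\X\Lam$, and take slices $N_\lam=\{x:(x,\lam)\in\widetilde N\}$ and $E_\lam=\{x:(x,\lam)\in\widetilde E\}$. Since $\widetilde\Phi$ preserves the $\lam$-coordinate, the two defining properties of an exit set slice cleanly in $\lam$, and $(N_\lam,E_\lam)$ turns out to be an index pair of $M_\lam$ for $\Phi_\lam$. To prove local constancy at an arbitrary $\lam_0\in\Lam$, I would use the continuity of $\widetilde\Phi$ together with the compactness of $\widetilde N$ to construct, for $\lam$ in a small neighborhood $V$ of $\lam_0$, a homotopy equivalence of pointed spaces $N_\lam/E_\lam\simeq N_{\lam_0}/E_{\lam_0}$, for instance by transporting points through the flow along a short path in $\Lam$ from $\lam$ to $\lam_0$ while using the $\widetilde\Phi$-invariance of $\widetilde E$ to keep track of the base point.

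The main obstacle is the construction of this homotopy equivalence between the slice quotients. The skew-product structure is helpful in that each orbit remains in a single fiber, but one must still transport information continuously across fibers and match exit sets under the deformation; this is where the asymptotic compactness of $\widetilde\Phi$ on the full product $X\X\Lam$ is indispensable, as it guarantees the needed compactness of orbit segments required for the homotopy to close up. Once local constancy is established, the connectedness of $\Lam$ immediately forces $h(\Phi_\lam,M_\lam)$ to take a single homotopy type throughout $\Lam$.
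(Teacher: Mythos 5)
The paper does not actually prove this lemma --- it is quoted from \cite{LW} and ultimately rests on the Conley--Rybakowski continuation theorem --- so your proposal can only be measured against that standard argument. Your preparatory steps are sound: each slice $M_\lambda$ of $\mathcal{M}$ is indeed a compact isolated invariant set of $\Phi_\lambda$ (the slice of an isolating neighborhood of $\mathcal{M}$ isolates $M_\lambda$, exactly as you argue), and because $\widetilde{\Phi}$ preserves fibers, the slices of an index pair of $\mathcal{M}$ are index pairs for the $M_\lambda$. The reduction of global constancy to local constancy via connectedness of $\Lambda$ is also fine.

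The gap is in the one step you yourself flag as ``the main obstacle'': the homotopy equivalence $N_\lambda/E_\lambda \simeq N_{\lambda_0}/E_{\lambda_0}$. The mechanism you propose --- transporting points through the flow along a short path in $\Lambda$ from $\lambda$ to $\lambda_0$ --- cannot work as described, because $\widetilde{\Phi}(t)(x,\lambda)=(\Phi_\lambda(t)x,\lambda)$ never changes the $\lambda$-coordinate: no orbit of the skew product moves between fibers, so the flow provides no transport across the base whatsoever. The actual proof is structured differently: one restricts the skew product to a small closed neighborhood $V$ of $\lambda_0$ in $\Lambda$, takes an index pair $(\widetilde{N},\widetilde{E})$ of the part of $\mathcal{M}$ lying over $V$, and shows that for each $\lambda\in V$ the inclusion $N_\lambda/E_\lambda \hookrightarrow \widetilde{N}/\widetilde{E}$ of pointed spaces is a homotopy equivalence; the homotopy inverse is built from flow-defined ``squeezing'' maps acting within each fiber (push points forward by a large common time $T$ and collapse to $[\widetilde{E}]$ those that exit), and it is precisely the admissibility hypothesis that makes such a $T$ uniform over $\widetilde{N}$. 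This construction is the entire content of the continuation theorem and occupies several pages in \cite{R}; without carrying it out (or explicitly invoking it) your argument does not close. A smaller point: $\Lambda$ is only assumed to be a connected compact metric space, which need not be path-connected, so ``a short path in $\Lambda$'' may not even exist; the neighborhood-based formulation above avoids paths altogether and still yields global constancy from connectedness.
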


Suppose that $B\subset X$ is a bounded closed set. $x\in\partial B$
is said to be a {\it strict egress} (resp., {\it strict ingress}, {\it bounce-off})
{\it point} of $B$, if for each trajectory
$\gamma:[-\delta,\tau]\rightarrow X$ with $\gam(0)=x$, where $\delta\geq 0,\tau>0$, the
following two properties hold.

(1) There exists $0<s<\tau$ so that
$$
  \gamma(t)\not\in B \hs({\rm resp}.,\hs\gamma(t)\in {\rm int}B, \hs{\rm resp}., \hs\gamma(t)\not\in B),\Hs \forall t\in(0,s);
$$

(2) If $\delta>0$, then there exists $0<\beta<\delta$ such that
$$
  \gamma(t)\in {\rm int} B \hs({\rm resp}., \hs\gamma(t)\not\in B, \hs{\rm resp}., \hs\gamma(t)\not\in B),\Hs \forall
  t\in(-\beta,0).
$$

Denote  the set of all strict egress (resp. strict ingress,
bounce-off) points of the closed set $B$ by $B^e(\,{\rm resp}.\,
B^i,\,\, B^b)$, and set $B^-=B^e\cup B^b$. For convenience in
statement, if $B^-$ is the exit set, we call $B^-$ the {\it boundary
exit set} of $B$.
\vs

A closed set $B\subset X$ is called an {\it isolated block}\cite{R} if $B^-$ is
closed and $\partial B=B^i\cup B^-$.

Suppose that $K\subset X$ is a compact isolated invariant set and
the isolating block $B$ is the isolating neighborhood of $K$. If
$B^-$ is B-positively invariant, then $B$ is called the {\it index
neighborhood} of $K$.

By the definition of index neighborhood, we have the following
result.
\begin{thm}\label{L2.1}
Let $K\subset X$ be a compact isolated invariant set and $N$ be the
isolating neighborhood of $K$. Then there exists an isolating block
$B$ in $N$ such that $B$ is the index neighborhood of $K$.
\begin{proof}
By Chapter 1, Theorem 5.1 in \cite{R}, we deduce that there exists a
bounded closed set $B\subset N$ with $K\subset B$ such that $B$ is
an isolating block. Moreover, from \cite{R}, it holds that if $B$ is
a bounded isolating block, then $(B,B^-)$ is an index pair of the
maximal compact invariant $K$ in $B$. Thus the result follows from
the definition of index neighborhood.
\end{proof}
\end{thm}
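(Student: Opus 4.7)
The plan is to reduce the statement to the existence of isolating blocks with prescribed localization, a standard construction in the Conley--Rybakowski theory. Concretely, I would first invoke the classical existence theorem for isolating blocks (Theorem 5.1 in Chapter I of \cite{R}), which asserts that for any isolated invariant set $K$ with isolating neighborhood $N$, one can construct a closed set $B \subset N$ containing $K$ in its interior such that $B^- = B^e \cup B^b$ is closed and $\partial B = B^i \cup B^-$. This directly provides a candidate for the desired index neighborhood and is precisely where the existence content of the result comes from.

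Next I would verify the two conditions in the definition of index neighborhood. Since $B \subset N$ and $B$ is an isolating block, $K$ remains the maximal compact invariant set in $B$, so $B$ is itself an isolating neighborhood of $K$. The remaining issue is to show that $B^-$ is $B$-positively invariant. This is where I would use the definitions of strict egress and bounce-off points together with the fact that $B^- = B^e \cup B^b$ is closed: if $x \in B^-$ and $\Phi([0,t])x \subset B$, then by the local behaviour at egress/bounce-off points the trajectory must leave $\mathrm{int}\,B$ immediately after $0$; the closedness of $B^-$ together with $\partial B = B^i \cup B^-$ forces $\Phi(s)x \in B^-$ for every $s \in [0,t]$. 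This is exactly the positive invariance needed.

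Having both properties, $B$ fits the definition of the index neighborhood of $K$. I would finish by remarking that the pair $(B, B^-)$ is then automatically an index pair, which is in fact a byproduct of the general isolating block theory referenced above, so this construction is consistent with Definition~\ref{defn2.8}.

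The main obstacle is really bookkeeping: the existence of an isolating block with the right properties is a nontrivial classical construction that I would quote rather than reprove, and the only genuine verification left is the $B$-positive invariance of $B^-$, which must be extracted carefully from the pointwise definitions of strict egress and bounce-off points together with the structural identity $\partial B = B^i \cup B^-$.
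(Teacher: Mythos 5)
Your proposal follows essentially the same route as the paper: both rest on Rybakowski's existence theorem for isolating blocks (Chapter I, Theorem 5.1 of \cite{R}) applied inside the given isolating neighborhood $N$, and then conclude that $B^-$ is an exit set of $B$ --- the paper by quoting the fact that $(B,B^-)$ is an index pair, you by checking the $B$-positive invariance of $B^-$ directly. One small remark on that last verification: since a strict egress or bounce-off point leaves $B$ itself (not merely $\mathrm{int}\,B$) immediately in forward time, the hypothesis $\Phi([0,t])x\subset B$ with $x\in B^-$ already forces $t=0$, so the invariance condition holds vacuously and the appeal to the closedness of $B^-$ is not actually needed.
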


\setcounter {equation}{0}
\section{Invariant-set/attractor bifurcation}

\hs \,\, In this section, we establish some local bifurcation results in terms
of invariant sets.
\subsection{Main results}

\hs \,\, It is well known that (see e.g. \cite{H,SY}) that the Cauchy problem of \eqref{e1.1} is
well-posed in $X^\alpha$ under the assumptions in Section 1.
Specifically, for any initial value $u_0\in X^\alpha$, the equation
\eqref{e1.1} has a unique continuous solution $u(t)\in X^\alpha$
with $u(0)=u_0$ on a maximal existence interval $[0,T)$ for some $T>0$. Let $\Phi_\lambda$ be the local semiflow
generated by the equation \eqref{e1.1} on $X^\alpha.$

\vs

Let $L_{\lambda}=A-Df_{\lambda}(0)$. Assume there exist a
neighborhood $J_0=(\lambda_0-\varepsilon, \lambda_0+\varepsilon)$ of
$\lambda_0$ and a positive constant $\beta$ such
that the following hypotheses $(\mathbf{A}1)$-$(\mathbf{A}3)$ are satisfied.

 \vs

$(\mathbf{A}1)$ The spectrum $\sigma(L_\lambda)$ has a decomposition
 $\sigma(L_\lambda)=\bigcup_{1\leq i\leq 2}\sigma^{i}_{\lambda}$ such that
\begin{align*}
  \max_{\mu\in\sigma_{\lambda}^1}|\mathbf{Re}(\mu)|\leq \beta \hs{\rm
and}\hs
  \min_{\mu\in\sigma_{\lambda}^2}\mathbf{Re}(\mu)\geq 2\beta,
\hs \forall\lambda\in J_0.
\end{align*}

$(\mathbf{A}2)$ For every $\lambda\in J_0$, $X^\al$ has a decomposition
$X^\al=X_{\lambda}^1\oplus X_\lambda^2$ with respect to the spectral
decomposition in $(\mathbf{A}1)$, where $X_\lambda^i(i=1,2)$ are
$L_\lambda$-invariant subspaces of $X^\al$. Moreover,
$$
  1\leq\dim(X_\lambda^1)<\infty.
$$

$(\mathbf{A}3)$  The projection operator $P_\lam^{1}:X^\al\ra X^1_\lam$ is continuous in $\lam$.
\Vs

For  simplicity,  from now on we drop the subscript ``$\lam_0$'' and
rewrite $X^i=X_{\lam_0}^i.$ Let $E=X^\alpha$, and $$E^i=E\cap
X^i,\Hs i=1,2.$$ Then $E=E^1\oplus E^2$.  Furthermore, since $X^1$
is finite dimensional, we have $E^1=X^1$ and $n:=\dim(E^1)\geq
1$.

By virtue of  Proposition 2 in Appendix A, there is a family of
isomorphisms $T=T_\lam$ ($\lam\in J_0$) on $X$ depending
continuously on $\lam$ with $T_{\lam_0}=I$, such that
$$
    T X^i_\lam=X^i_{\lam_0}:=X^i,\Hs i=1,2.$$

Next we introduce the definition of invariant sets bifurcation, and
then we state and prove the local invariant sets bifurcation result.

\begin{defn}\label{defn3.1}Let $I$ be a subset of $\R$. We say that
the equation \eqref{e1.1} undergoes an invariant-set bifurcation on
$I$ from the trivial solution $(0,\lambda_0)$, if there is a
sequence $\lam_n\in I$, $\lam_n\ra\lam_0$ as $n\ra\8$, such that
 $\Phi_{\lambda_n}$ has an invariant set $\mathcal{A}_{\lambda_n}$ with $\cA_{\lam_n}\setminus\{0\}\ne\emp$; furthermore,
$$
  \lim\limits_{n\rightarrow\8}\mathrm{dist}_H(\mathcal{A}_{\lambda_n},0)=0.
  $$

If each invariant set $\mathcal{A}_{\lambda_n}$ is an attractor of
the system with $0\not\in \cA_{\lam_n}$, then we say that
\eqref{e1.1} undergoes an attractor bifurcation on $I$ from
$(0,\lambda_0)$.
\end{defn}

\begin{thm}\label{L3.0}
Let the assumptions $(\mathbf{A}1)$-$(\mathbf{A}3)$ hold true.
Suppose
\begin{align*}
 (\mathbf{A}4) \hs
 \min_{\mu\in\sigma_{\lambda}^1}\mathbf{Re}(\mu)>0 \,\,(\lam  \in(\lam_0-\varepsilon,\lam_0)),\hs
 \max_{\mu\in\sigma_{\lambda}^1}\mathbf{Re}(\mu)<0\,\,(   \lambda \in(\lam_0,\lam_0+\varepsilon)).
\end{align*} If $\mathcal{A}_0:=\{0\}$ is an isolated invariant set of $\Phi_{\lam_0}$,
then there exist $\varepsilon_1>0$ and a closed neighborhood $W$ of
$0$ in $E$ such that one of the alternatives holds.
  \benu
  \item[(1)]

  The system undergoes  an invariant-set bifurcation on $I^-=[\lam_0-\varepsilon_1,\,\lam_0)$ from $(0,\lam_0)$. More precisely,
  for any $\lam\in I^-$, $\Phi_\lam$ has a nonempty  compact invariant set $K_\lam$ with $K_\lam$ in $W\setminus\{0\}$ such that
\be\label{e3.2}
\lim\limits_{\lam\rightarrow\lam_0}\mathrm{dist}_H(K_{\lambda},0)=0.
\ee

  \item[(2)] The system undergoes  an attractor bifurcation on $I^+=(\lam_0,\lam_0+\varepsilon_1]$ from $(0,\lam_0)$.
  Specifically, for any $\lam\in I^+$, $\Phi_\lam$ has an attractor $K_\lam$ with $K_\lam$ in $W\setminus\{0\}$ such that \eqref{e3.2} holds.
  Furthermore, $K_\lam$ contains an invariant topological sphere
  $\mathbb{S}^{n-1}$.
  \eenu
\end{thm}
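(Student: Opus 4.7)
The plan is to isolate $\{0\}$ in a common neighborhood for $\lambda$ near $\lambda_0$, compare the Conley index of the maximal invariant set in that neighborhood against the Conley index of $\{0\}$ itself on each side of $\lambda_0$, and exploit the mismatch. First, using Theorem~\ref{L2.1} I would fix an isolating block $B$ of $\{0\}$ for $\Phi_{\lambda_0}$, chosen so that $B\subset W$ for a preassigned small closed neighborhood $W$ of $0$ in $E$. The hypothesis $(\mathbf{AC})$ applied to the skew-product $\widetilde\Phi$ on $E\times[\lambda_0-\varepsilon,\lambda_0+\varepsilon]$ together with a standard upper-semicontinuity argument yield $\varepsilon_1>0$ such that $B$ is an isolating neighborhood of the maximal invariant set $M_\lambda$ of $\Phi_\lambda$ in $B$ for every $\lambda\in\Lambda_1:=[\lambda_0-\varepsilon_1,\lambda_0+\varepsilon_1]$, with $\operatorname{dist}_H(M_\lambda,\{0\})\to 0$ as $\lambda\to\lambda_0$. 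Applying Lemma~\ref{thm 2.1} to $\mathcal M:=\{(x,\lambda):x\in M_\lambda,\,\lambda\in\Lambda_1\}$ then gives $h(\Phi_\lambda,M_\lambda)\equiv h(\Phi_{\lambda_0},\{0\})=:h$ throughout $\Lambda_1$.

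Second, from $(\mathbf{A}4)$ and linearization at $0$ I would identify the Conley index of $\{0\}$ itself for $\lambda\ne\lambda_0$. For $\lambda\in(\lambda_0-\varepsilon_1,\lambda_0)$ every eigenvalue of $L_\lambda$ has positive real part, so $0$ is a hyperbolic local attractor of $\Phi_\lambda$ with index the homotopy type $\Sigma^0$ of a pointed $0$-sphere. For $\lambda\in(\lambda_0,\lambda_0+\varepsilon_1)$, $\{0\}$ is hyperbolic with an $n$-dimensional unstable subspace (where $n=\dim E^1$), so it is isolated with index $\Sigma^n$.

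Third, the proof splits according to the value of $h$. If $h\ne\Sigma^0$, then for every $\lambda\in I^-=[\lambda_0-\varepsilon_1,\lambda_0)$ one has $h(\Phi_\lambda,\{0\})=\Sigma^0\ne h=h(\Phi_\lambda,M_\lambda)$, so $M_\lambda\supsetneq\{0\}$. Since $\{0\}$ is a hyperbolic attractor of $\Phi_\lambda$, if the dual repeller $\mathcal R_\lambda:=\{x\in M_\lambda:\omega(x)\cap\{0\}=\emptyset\}$ were empty, then $M_\lambda$ would lie entirely in the local basin of $\{0\}$; uniform attraction on compact subsets of that basin together with the invariance $\Phi_\lambda(t)M_\lambda=M_\lambda$ would force $M_\lambda=\{0\}$, a contradiction. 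Hence $\mathcal R_\lambda$ is a nonempty compact invariant set in $M_\lambda\setminus\{0\}\subset W\setminus\{0\}$, and setting $K_\lambda:=\mathcal R_\lambda$ proves alternative~(1). If instead $h=\Sigma^0$, then for every $\lambda\in I^+=(\lambda_0,\lambda_0+\varepsilon_1]$ the indices $\Sigma^0$ and $\Sigma^n$ differ and $M_\lambda\supsetneq\{0\}$; now $\{0\}$ is a hyperbolic repeller, the time-reversed analogue of the previous argument produces a nonempty compact dual attractor $K_\lambda\subset M_\lambda\setminus\{0\}$ of $\Phi_\lambda$ in $M_\lambda$, and because $B$ is an index neighborhood and $K_\lambda$ has positive distance from the local stable manifold $W^s_{\mathrm{loc}}(\{0\})$, $K_\lambda$ actually attracts an $E$-neighborhood of itself and is thus an attractor of $\Phi_\lambda$ in $E$. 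This gives alternative~(2). In both cases $\operatorname{dist}_H(K_\lambda,0)\le\operatorname{dist}_H(M_\lambda,0)\to 0$ as $\lambda\to\lambda_0$, which is \eqref{e3.2}.

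The most delicate step will be producing the invariant topological $(n-1)$-sphere inside $K_\lambda$ in alternative~(2). For this I would exploit the uniform decomposition $E=E^1\oplus E^2$ supplied by the isomorphisms $T_\lambda$ of Proposition~2 in Appendix~A, so that in the $T_\lambda$-conjugated system the $n$-dimensional linear unstable subspace at $\lambda>\lambda_0$ is the fixed subspace $E^1$. A small sphere in $E^1$, embedded via the local unstable manifold of $\{0\}$, is pushed forward by the semiflow into a family of topological $(n-1)$-spheres that remain uniformly bounded away from $0$ by hyperbolicity, and by asymptotic compactness its limit set is an invariant topological sphere lying in $K_\lambda$---the same geometric reasoning as in the classical attractor-bifurcation argument of Ma and Wang~\cite{MW}. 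Keeping this limit sphere strictly away from $\{0\}$, so that it lands in $K_\lambda$ rather than being swallowed by the repeller, is the main geometric obstacle.
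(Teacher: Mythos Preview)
Your Case~(1) argument is essentially the paper's: a Conley-index mismatch between the continued index $h$ of the maximal invariant set and the index $\Sigma^0$ of the hyperbolic attractor $\{0\}$ for $\lambda<\lambda_0$ forces $M_\lambda\supsetneq\{0\}$, and the repeller dual to the attractor $\{0\}$ in $M_\lambda$ gives $K_\lambda$. The paper carries this out after reducing to the local center manifold via Lemmas~\ref{L3.1}--\ref{L3.2}, but the logic is the same.

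Case~(2) has a genuine gap. Your dichotomy ``$h=\Sigma^0$ versus $h\ne\Sigma^0$'' is in fact equivalent to the paper's dichotomy ``$\{0\}$ is an attractor of $\Phi_{\lambda_0}$ versus not'': if $\{0\}$ is not an attractor then any index pair $(N_0,N_0^-)$ has $N_0^-\ne\emptyset$, whence $H_0(h)=0\ne\mathbb Z=H_0(\Sigma^0)$. You never make this identification, and without it you lack the key ingredient the paper uses---namely, a \emph{positively invariant} neighborhood of $\{0\}$ at $\lambda=\lambda_0$. The paper obtains this from a smooth Lyapunov function $V$ for the attractor $\{0\}$ on the center manifold, takes a sublevel set $\mathcal V=\{V\le a\}$, and shows by continuity of $G_\lambda$ that $\mathcal V$ remains positively invariant (in fact absorbing) for all $\lambda\in[\lambda_0,\lambda_0+\varepsilon_1]$. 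This makes $M_\lambda$ a genuine attractor for each such $\lambda$, and then the attractor $K_\lambda$ dual to the repeller $\{0\}$ inside $M_\lambda$ is automatically an attractor in $E$. Your isolating block $B$, chosen once and for all at $\lambda_0$, is \emph{not} positively invariant in general, so there is no reason orbits starting near $K_\lambda$ stay in $B$; the assertion that ``$K_\lambda$ has positive distance from $W^s_{\mathrm{loc}}(\{0\})$, so $K_\lambda$ attracts an $E$-neighborhood of itself'' does not follow. Relatedly, for $\lambda>\lambda_0$ the equilibrium $\{0\}$ is \emph{not} a hyperbolic repeller in $E$---it is a saddle with $n$-dimensional unstable and infinite-dimensional stable manifold---so the ``time-reversed analogue'' cannot be run in $E$; it only works after the center-manifold reduction that you have avoided.

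For the invariant sphere, the paper's argument is also different and more direct: on the $n$-dimensional center manifold one can invert the flow, $\{0\}$ becomes an attractor of $\phi_\lambda^-$ with basin $\Sigma$, an explicit homotopy shows $\Sigma$ is contractible, and $\partial\Sigma$ is the required invariant topological $(n-1)$-sphere inside $K_\lambda$. Your push-forward-of-a-small-sphere idea would need substantial work to guarantee the limit set is a sphere rather than a more complicated continuum.
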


As a simple consequence of the above theorem, we have the following attractor bifurcation result.

\begin{thm}\label{AT}
Let the assumptions $(\mathbf{A}1)$-$(\mathbf{A}4)$ hold true.
Suppose also that $\mathcal{A}_0:=\{0\}$ is a global attractor for each $\lam<\lam_0$.

Then either $\cA_0$ is not isolated with respect to $\Phi_{\lam_0}$,
or the system undergoes an attractor bifurcation on
$I^+=(\lam_0,\lam_0+\varepsilon_1]$ from $(0,\lam_0)$ for some
$\varepsilon_1>0$. Specifically, for any $\lam\in I^+$, $\Phi_\lam$
has an attractor $K_\lam$ with $K_\lam$ in $W\setminus\{0\}$ for some closed neighborhood $W$ of $0$ in $E$ such
that \eqref{e3.2} holds and $K_\lam$ contains an invariant
topological sphere $\mathbb{S}^{n-1}$.
 \end{thm}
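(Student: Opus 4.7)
The plan is to derive Theorem \ref{AT} as an immediate corollary of Theorem \ref{L3.0}, by showing that under the additional global attractor hypothesis, alternative (1) of Theorem \ref{L3.0} cannot occur, so alternative (2) must hold.

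First, I would dispose of the trivial case: if $\mathcal{A}_0=\{0\}$ fails to be an isolated invariant set of $\Phi_{\lambda_0}$, then the first clause in the dichotomy stated in Theorem \ref{AT} is already true and there is nothing further to prove. So I would assume from now on that $\mathcal{A}_0$ \emph{is} isolated with respect to $\Phi_{\lambda_0}$. Then hypotheses $(\mathbf{A}1)$--$(\mathbf{A}4)$ together with this isolation assumption are exactly what Theorem \ref{L3.0} requires, and it produces $\varepsilon_1>0$ and a closed neighborhood $W$ of $0$ in $E$ for which either (1) or (2) of that theorem holds.

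The main — and essentially only — step is to rule out alternative (1). Suppose, for contradiction, that for every $\lambda \in I^-=[\lambda_0-\varepsilon_1,\lambda_0)$ there is a nonempty compact invariant set $K_\lambda$ of $\Phi_\lambda$ with $K_\lambda \subset W\setminus\{0\}$. Fix any such $\lambda<\lambda_0$. Because $K_\lambda$ is compact invariant, through any $x\in K_\lambda$ there is a full trajectory $\gamma$ of $\Phi_\lambda$ contained in $K_\lambda$, and the compactness of $K_\lambda$ forces $\omega(\gamma)$ to be a nonempty subset of $K_\lambda$. On the other hand, by assumption $\mathcal{A}_0=\{0\}$ is the global attractor of $\Phi_\lambda$, so every point of $E$ is attracted to $\{0\}$; in particular $\omega(\gamma)\subset \mathcal{A}_0=\{0\}$. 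Combining these two inclusions yields $\{0\}=\omega(\gamma)\subset K_\lambda$, which contradicts $K_\lambda\subset W\setminus\{0\}$. Hence alternative (1) is impossible, and alternative (2) of Theorem \ref{L3.0} must hold — which is precisely the attractor bifurcation statement of Theorem \ref{AT}, including the invariant topological sphere $\mathbb{S}^{n-1}\subset K_\lambda$ and the convergence \eqref{e3.2}.

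There is no real obstacle: the heavy lifting (Conley-index continuation, construction of $W$, existence of the sphere) is all packaged inside Theorem \ref{L3.0}. The only point requiring a word of care is the standard fact that, under the assumption $(\mathbf{AC})$ of asymptotic compactness, a nonempty compact invariant set of a semiflow is saturated by full trajectories and every such trajectory has a nonempty $\omega$-limit set lying inside the invariant set; this is built into the framework of Section 2 and is the hinge that lets the global attractor hypothesis collide with the conclusion of alternative (1).
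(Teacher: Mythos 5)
Your proposal is correct and matches the paper's intent: the paper states Theorem \ref{AT} as ``a simple consequence'' of Theorem \ref{L3.0}, the implicit argument being exactly yours --- if $\mathcal{A}_0$ is isolated, apply Theorem \ref{L3.0}, and the global attractor hypothesis for $\lambda<\lambda_0$ forces every nonempty compact invariant set to meet (indeed contain) $\{0\}$, which rules out alternative (1). Your filling-in of the $\omega$-limit argument is a valid justification of that step, so nothing further is needed.
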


\vs \noindent \textbf{Remark}. The above result drops an additional
assumption that the trivial solution $\cA_0$ is an attractor on the
local center manifold with respect to the system $\Phi_{\lam_0}$,
just as what is expressed in Ma and Wang \cite{MW} and only assumes
that the trivial solution $\cA_0$ is an isolated invariant set of
$\Phi_{\lam_0}$. In this sense, our result is more general and
extends the attractor bifurcation result obtained in \cite{MW}.

\subsection{Proof of the main results}

\hs \, In this subsection we will give the proof of our main result. For
this purpose, we first introduce some lemmas, which are crucial in
our proof.

\begin{lem}\label{L3.1}
Let the assumptions $(\mathbf{A}1)$-$(\mathbf{A}3)$ hold true. Then
there exist an open convex neighborhood $U$ of $0$ in $E^1$ and a
mapping $\xi=\xi_\lambda(v)$ from  $U\times J_0$ to $E^2$, which is
continuous in $(v, \lambda)$ and differentiable in $v$, such that
for any $\lambda\in J_0$, \be\label{e3.3}
  M_\lambda:=T^{-1}\mathcal{M}_\lam, \Hs where \hs \mathcal{M}_\lam:= \{v+\xi_\lambda(v), \hs v\in U\},
\ee is a local invariant manifold of the system \eqref{e1.1} with
$\xi_\lambda(0)=D\xi_\lambda(0)=0$.
\end{lem}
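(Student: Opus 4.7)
\medskip

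\noindent\textbf{Proof plan for Lemma \ref{L3.1}.}
The plan is to reduce the statement to a standard parameter-dependent local invariant manifold theorem by first flattening the splitting $X^\al=X^1_\lam\oplus X^2_\lam$ into the $\lam$-independent splitting $E=E^1\oplus E^2$, and then invoking a Lyapunov--Perron type construction for the transformed equation.

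First I would apply the isomorphisms $T=T_\lam$ supplied by Proposition 2 of Appendix A. Setting $w=T_\lam u$ transforms \eqref{e1.1} into an equivalent evolution equation
\[
  w_t+\widetilde A_\lam w=\widetilde f_\lam(w),
\]
where $\widetilde A_\lam=T_\lam A T_\lam^{-1}$ has the same spectral data as $A$, but with the associated spectral subspaces equal to the fixed subspaces $X^1,X^2$. The linearization at $0$ still satisfies $(\mathbf{A}1)$ with gap $\beta<2\beta$, the corresponding spectral projections $P^1,P^2$ are now $\lam$-independent, and the remainder $\widetilde g_\lam(w)=\widetilde f_\lam(w)-D\widetilde f_\lam(0)w$ still obeys $\widetilde g_\lam(w)=o(\|w\|_E)$ as $\|w\|_E\to0$, continuously in $\lam\in J_0$.

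Next, for the transformed system I would invoke the standard local invariant manifold theorem in the form given, for instance, in Henry \cite{H} or Sell--You \cite{SY}. Because $E^1$ is finite dimensional and because $(\mathbf{A}1)$ provides the spectral gap $\beta<2\beta$, the hypotheses of such a theorem hold uniformly in $\lam\in J_0$. The construction proceeds by choosing a smooth cut-off $\chi$ with $\chi\equiv 1$ near $0$, replacing $\widetilde g_\lam$ by the globally small Lipschitz perturbation $\widetilde g_\lam(w)\chi(\|w\|_E/\rho)$ for small $\rho>0$, and then applying the Banach contraction principle to the Lyapunov--Perron operator on a suitable space of bounded continuous trajectories. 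This produces a graph function $\xi_\lam:U\to E^2$, defined on an open convex neighborhood $U$ of $0$ in $E^1$, with $\xi_\lam(0)=0$, $D\xi_\lam(0)=0$, differentiable in $v$, and such that $\mathcal{M}_\lam=\{v+\xi_\lam(v):v\in U\}$ is locally invariant for the transformed semiflow. Finally, pulling back by $T_\lam^{-1}$ yields $M_\lam:=T_\lam^{-1}\mathcal{M}_\lam$, which is a local invariant manifold for \eqref{e1.1}.

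The main technical point, and the one I would treat most carefully, is the joint continuity of $\xi_\lam(v)$ in $(v,\lam)$. This forces me to set up the Lyapunov--Perron fixed point problem in a Banach space of maps continuous in both variables and to check that the fixed point map is a uniform contraction in $\lam\in J_0$; the necessary uniform estimates on the linear semigroups $e^{-\widetilde A_\lam t}P^i$ are inherited from the continuous dependence of the projections and from the spectral gap in $(\mathbf{A}1)$. Once this is done, the desired continuity of $\xi_\lam$, and hence of $M_\lam=T_\lam^{-1}\mathcal{M}_\lam$, follows together with the stated tangency conditions at $0$.
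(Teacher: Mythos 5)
Your proposal is correct and follows essentially the same route as the paper, which simply defers to the standard invariant-manifold construction in Henry \cite{H} and Hale \cite{H1} together with the uniform contraction principle; your flattening of the splitting via $T_\lam$ and the Lyapunov--Perron fixed point with a cut-off is exactly the standard argument being invoked, with the uniform contraction step supplying the joint continuity in $(v,\lam)$ as intended.
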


The proof of Lemma 3.1 follows from the standard argument in the
geometric theory of PDEs and the uniform contraction principle, one
can see Henry \cite{H} and Hale \cite{H1} for details.

\begin{lem}\label{L3.2}\cite{LW}
Let the assumptions $(\mathbf{A}1)$-$(\mathbf{A}3)$ hold true and
$M_\lambda$ be the local invariant manifold obtained in Lemma 3.1,
and $\Phi_\lambda^1$ be the restriction of $\Phi_\lambda$ on
$M_\lambda.$

Then there exist a neighborhood $U_0$ of $0$ and $\varepsilon_0>0$
such that for each
$\lambda\in[\lambda_0-\varepsilon_0,\lambda_0+\varepsilon_0],$
$\mathcal{A}_\lambda$ is an isolated invariant set of $\Phi_\lambda$
in $U_0$ iff it is an isolated invariant set of $\Phi_\lambda^1$ on
$M_\lambda.$ Furthermore,
$$
  h(\Phi_\lambda,\mathcal{A}_\lambda)=h(\Phi_\lambda^1,\mathcal{A}_\lambda).
  $$

\end{lem}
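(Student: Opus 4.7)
The plan is to reduce the full semiflow near $0$ to its restriction on the center manifold $M_\lam$ by exploiting the exponential dichotomy encoded in $(\mathbf{A}1)$ and $(\mathbf{A}2)$, and then to transfer both ``isolated invariance'' and the Conley index through this reduction. The key geometric input is that, because $\min_{\mu\in\sigma_\lam^2}\mb{Re}(\mu)\geq 2\b$, the linear semigroup $e^{-L_\lam t}$ contracts strongly on $E^2_\lam$, while on $E^1_\lam$ it grows at most like $e^{\b t}$. This gap forces any complete bounded trajectory lying in a sufficiently small neighborhood of $0$ to sit inside $M_\lam$, after which the lemma becomes almost formal.

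First I would use Proposition 2 of Appendix A together with $(\mathbf{A}1)$--$(\mathbf{A}3)$ to obtain constants $C,\b>0$ and a neighborhood $J_0$ of $\lam_0$ such that, uniformly in $\lam\in J_0$,
$$
\|e^{-L_\lam t}P_\lam^1\|\leq Ce^{\b t},\Hs \|e^{-L_\lam t}(I-P_\lam^1)\|\leq Ce^{-2\b t}\quad(t\geq 0).
$$
Then I would fix a small neighborhood $U_0$ of $0$ in $E$ and $\ve_0>0$ with $[\lam_0-\ve_0,\lam_0+\ve_0]\ss J_0$ on which $g_\lam$ satisfies a sufficiently small Lipschitz bound, and prove the \emph{containment lemma}: every complete trajectory $\gam:\R\ra U_0$ of $\Phi_\lam$ entirely contained in $U_0$ lies in $M_\lam$. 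This is the classical ``bounded solutions lie on the center manifold'' result: projecting the variation of constants formula onto $E^2_\lam$ and using the strong contraction there together with a Gronwall/contraction estimate forces the $E^2_\lam$-component of $\gam(t)$ to coincide with $\xi_\lam$ applied to its $E^1_\lam$-component.

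Once containment is established, the equivalence of isolated invariance follows almost tautologically. If $\cA_\lam$ is the maximal invariant set in a closed neighborhood $N\ss U_0$ for $\Phi_\lam$, then $\cA_\lam\ss M_\lam$ and is maximal invariant in $N\cap M_\lam$ for $\Phi_\lam^1$; conversely, given an isolating neighborhood of $\cA_\lam$ in $M_\lam$, a thin $E^2_\lam$-thickening is an isolating neighborhood in $E$, since any invariant set in that thickening must also sit inside $M_\lam$ by the containment lemma. For the index identification I would build an index pair $(N_1,E_1)$ for $\cA_\lam$ inside $M_\lam$ via Theorem \ref{L2.1}, thicken it to $(N_1\oplus D^2,E_1\oplus D^2)$ with $D^2$ a small closed disk in $E^2_\lam$, and verify that this is an index pair for $\cA_\lam$ with respect to $\Phi_\lam$: the strong contraction on $E^2_\lam$ prevents any exit through the $D^2$-faces, so the exit set really is $E_1\oplus D^2$. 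Collapsing the contractible fiber $D^2$ then gives the homotopy equivalence $(N_1\oplus D^2)/(E_1\oplus D^2)\simeq N_1/E_1$, i.e. $h(\Phi_\lam,\cA_\lam)=h(\Phi_\lam^1,\cA_\lam)$.

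The main obstacle is uniformity in $\lam$: one needs a \emph{single} $U_0$ and a \emph{single} $\ve_0$ that work simultaneously for every $\lam\in[\lam_0-\ve_0,\lam_0+\ve_0]$. This is precisely where $(\mathbf{A}3)$ and Proposition 2 become essential: the continuity of $P_\lam^1$ and the isomorphisms $T_\lam$ (with $T_{\lam_0}=I$) allow one to straighten the $\lam$-dependent splitting $E=E^1_\lam\oplus E^2_\lam$ onto the fixed splitting $E^1\oplus E^2$, yielding uniform dichotomy constants, uniform Lipschitz bounds on $g_\lam$, and uniform Lipschitz control of the graph $\xi_\lam$. With these uniform estimates in place, both the containment lemma and the thickening-of-index-pair construction apply to every $\lam$ in the claimed interval, and the conclusion of Lemma \ref{L3.2} follows.
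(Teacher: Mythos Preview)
The paper does not actually give a proof of Lemma \ref{L3.2}; it simply cites the result from \cite{LW} and proceeds to use it in the proof of Theorem \ref{L3.0}. There is therefore nothing in the paper to compare your argument against.

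That said, your sketch is the standard route to such a center-manifold reduction of the Conley index, and it is essentially correct in outline. The containment step (every complete bounded trajectory near $0$ lies on $M_\lam$) is the classical local-attractivity property of center manifolds under a spectral gap, and your thickening construction $(N_1\oplus D^2,\,E_1\oplus D^2)$ is exactly how one transports an index pair from $M_\lam$ to the ambient space when the transverse direction is uniformly exponentially contracting. One small point worth tightening: to ensure that no orbit exits through the $\pa D^2$-faces you need not only the linear contraction on $E^2_\lam$ but also a foliation (or at least a cone-type) estimate showing that the nonlinear flow still decreases the transverse distance to $M_\lam$; this is usually packaged together with the construction of $\xi_\lam$ (as a normally hyperbolic/stable foliation over the center manifold) and should be stated explicitly. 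The uniformity-in-$\lam$ argument via $T_\lam$ is the right idea and matches the role Proposition A2 plays in the paper.
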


The proof of Theorem 3.1 is as follows.

\begin{proof}

Because the trivial solution $\cA_0$ is isolated with respect to
$\Phi_{\lam_0}$, two cases my occur.

\vs

\textbf{Case one.} If $\cA_0$ is not an attractor, next we prove
that (1) of Theorem 3.1 holds true.

\vs

According to assumption $(\mathbf{A}4)$, we see that the equilibrium
$\mathcal{A}_0=\{0\}$ is an attractor of $\Phi_\lambda$ for $\lambda
\in(\lam_0-\varepsilon,\lam_0).$

Set $B_\lam=TL_\lam T^{-1}$ and define
$$
  g_\lam(v)=T(f_\lam(T^{-1}v)-Df_\lam(0)(T^{-1}v)),\Hs v\in E.
$$
Then the system \eqref{e1.1} can be transformed into the following
equivalent equation by letting $u=T^{-1}v$,
\begin{align}\label{e3.4}
v_t=-B_\lambda v+g_\lambda(v),
\end{align}
for $\lambda\in J_0$. When system \eqref{e3.4} is restricted on
the local center manifold $\mathcal{M}_\lambda$ defined by
\eqref{e3.3}, it reduces to an ODE system on a neighborhood $U$
(independent of $\lambda$) of $\mathcal{A}_0$ in $E^1$:
\begin{align}\label{e3.5}
v_t=-B_\lambda^1 v+P^1g_\lambda(v+\xi_\lambda(v)):=G_\lambda(v),
\end{align}
where $B_\lambda^1=P^1B_\lambda,$ and $P^1:E=X^\alpha\rightarrow
E^1$ is the projection operator. Applying Lemma 3.2 to system
\eqref{e3.4} , we conclude that there exists a neighborhood
$\mathcal{U}$ of $0$ in $E$ and $\varepsilon_0>0$ such that for any
$\lambda\in[\lam_0-\varepsilon_0,\lam_0+\varepsilon_0]$,
$\mathcal{A}_\lambda$ is an isolated invariant set of system
\eqref{e3.4} in $\mathcal{U}$ iff it is an isolated invariant set of
the system restricted on the manifold $\mathcal{M}_\lambda.$

Since $\mathcal{A}_0$ is isolated, we can
choose a bounded closed neighborhood $W$ such that $W$ is an
isolated neighborhood of $\mathcal{A}_0$ with respect to
$\Phi_{\lam_0}$. By a simple argument via contradiction, one can
easily verify that $W$ is also an isolating neighborhood of the
maximal compact invariant set $\mathcal{A}_\lambda$ of
$\Phi_\lambda$ in $W$, provided $\lambda $ near $\lam_0$. That is to
say, there exists $0<\varepsilon_1\leq\varepsilon_0$ so that $W$ is
also an isolated neighborhood of $\mathcal{A}_\lambda$ for every
$\lambda\in[\lam_0-\varepsilon_1,\lam_0+\varepsilon_1]$. It is
trivial to check that
\begin{align}\label{e3.6}
  \lim\limits_{\lambda\rightarrow
  \lam_0}\mathrm{dist}_H(\mathcal{A}_{\lambda},\mathcal{A}_0)=0.
\end{align}
Moreover, Lemma 3.2 also shows that $\mathcal{A}_\lambda$ is also an
isolated invariant set of the system on the local center
manifold $\cM_\lambda$ for
$\lambda\in[\lam_0-\varepsilon_1,\lam_0+\varepsilon_1]$. Denote by
$\phi_\lambda$ the semiflow generated by the system \eqref{e3.5}.
Because the topological structure of the solutions of system
\eqref{e3.4} on $\mathcal{M}_\lambda$ is equivalent to that of the
system \eqref{e3.5} on $U$, $\mathcal{A}_0$ is also an isolated
invariant set of $\phi_0$ on $U$. Then one can pick a closed isolated
neighborhood $U_0$ of $\mathcal{A}_0$ satisfying $U_0\subset U$.
By Theorem 2.1, we take a closed neighborhood
$N_0\subset U_0 $ of $\mathcal{A}_0$ such that $N_0$ is an index
neighborhood of $\mathcal{A}_0$ with respect to $\phi_{0}$. We may
suppose that $N_0$ is path-connected. By virtue of \eqref{e3.6}, it
can be assumed that the $N_0$ is also an isolating neighborhood of
$\mathcal{A}_\lambda$ of $\phi_\lambda$ for
$\lambda\in[\lam_0-\varepsilon_1,\lam_0+\varepsilon_1]$. Therefore by
the continuation property of Conley index, we have
\begin{align}\label{e3.7}
  h(\mathcal{A}_\lambda,\phi_\lambda)\equiv const,\Hs \lambda \in
  [\lam_0-\varepsilon_1,\lam_0+\varepsilon_1].
\end{align}

On the other hand, because $\mathcal{A}_0$ is an attractor of
$\Phi_{\lambda}$ for $\lambda\in(\lam_0-\varepsilon,\lam_0)$, one
deduces that $\mathcal{A}_0$ is also an attractor of $\phi_\lambda$
on $U$ for $\lambda\in[\lam_0-\varepsilon_1,\lam_0)$. So there also
exists a path-connected closed neighborhood $N_1$ of $\mathcal{A}_0$
such that $N_1$ is an isolated neighborhood of $\mathcal{A}_0$ with
respect to $\phi_\lambda$ for
$\lambda\in[\lam_0-\varepsilon_1,\lam_0)$. Note that $\mathcal{A}_0$
is not an attractor for $\phi_{0}$, one concludes that the boundary exit set
$N_0^-\neq\emptyset$. By some elementary computations of homology Conley
index of $\mathcal{A}_0$, we have
\begin{align}\label{e3.8}
&
 H_0(h(\mathcal{A}_0,\phi_{0}))=H_0([N_0/N_0^-,[N_0^-]])=0,\nonumber\\
&
 H_0(h(\mathcal{A}_0,\phi_{\lambda}))=H_0([N_1/\emptyset,[\emptyset]])=H_0([N_1\cup\{p\}/\{p\},\{p\}])=\Z,
\end{align}
for $\lambda\in[\lam_0-\varepsilon_1,\lam_0)$, where $p\notin N_1$, which implies $h(\mathcal{A}_0,\phi_{0})\neq
h(\mathcal{A}_0,\phi_{\lambda})$. Therefore, we deduce from \eqref{e3.7} that for each
$\lambda\in[\lam_0-\varepsilon_1,\lam_0)$,
$$
  h(\mathcal{A}_\lambda,\phi_\lambda)=h(\mathcal{A}_0,\phi_{0})\neq h(\mathcal{A}_0,\phi_{\lambda}).
  $$
Then it follows that $\mathcal{A}_\lambda\setminus
\mathcal{A}_0\neq\emptyset.$ Since $\mathcal{A}_0$ is an attractor
of $\phi_\lambda$, we conclude that the set $$K_\lam:=\{x\in \cA_\lam:\w(x)\cap\cA_0=\emp\}$$ is a nonempty compact invariant set of $\phi_\lam$ with $(\mathcal{A}_0, K_\lambda)$ being an attractor-repeller
pair of $\mathcal{A}_\lambda$, see \cite{R}, pp.141. Note that $\cA_\lam$ is maximal in $N_0$. So one can see that
$K_\lambda$ is also the maximal compact invariant set of $\phi_\lambda$
in $N_0\setminus \mathcal{A}_0$ for
$\lambda\in[\lam_0-\varepsilon_1,\lam_0).$
Thus, let $I^-=[\lam_0-\varepsilon_1,\lam_0)$ and by \eqref{e3.6},
we see that (1) holds true.

\vs

\textbf{Case two.} If $\cA_0$ is an attractor, then we prove that
(2) of Theorem 3.1 holds true. Indeed, the proof of this case is a
slight modification of the one for the corresponding result in Ma
and Wang \cite{MW}. Here we give the details for completeness and
the reader' convenience.

\vs

Similarly applying Lemma 3.2 to \eqref{e3.4}, we can take a
neighborhood $\mathcal{U}$ of $0$ in $E$ and
$\varepsilon_0\in(0,\varepsilon)$ such that for any
$\lambda\in[\lam_0-\varepsilon_0,\lam_0+\varepsilon_0]$,
$\mathcal{A}_\lambda$ is an isolated invariant set of $\Phi_\lambda$
in $\mathcal{U}$ if and only if it is an isolated invariant set of
the system on the manifold $\mathcal{M}_\lambda.$ Now we consider
the system \eqref{e3.5} on $U$, which is restricted on the local
center manifold $\mathcal{M}_\lambda$.

Let $\phi_\lambda$ be the semiflow generated by system \eqref{e3.5}
on $U$. Then one deduces that $\mathcal{A}_0$ is also an attractor
of $\phi_0$. Denote by $\Omega=\Omega(\mathcal{A}_0)$ the attraction
basin of $\mathcal{A}_0$ in $U$ with respect to $\phi_0$. Owing to
the converse Lyapunov theorems on attractors \cite{L}, we deduce
that there is a function $V\in C^\infty(\Omega)$ such that
\begin{align}\label{e3.9}
  \nabla V(x)\cdot G_{\lam_0}(x)\leq -w(x),\Hs \forall x\in\Omega,
\end{align}
and satisfies $V(0)=0$ and
$\lim\limits_{x\rightarrow\partial\Omega}V(x)=+\infty$, where $w\in
C(\Omega)$ and $w(x)>0$ for $x\neq 0$, $G_\lam$ is given by \eqref{e3.5}. Set
$$
  \mathcal{V}=V_a:=\{x\in\Omega: V(x)\leq a\}.
$$
Then one can easily conclude that $\mathcal{V}$ is a compact positively
invariant neighborhood of $0$ in $E^1$ for each $a>0$. Now we choose
two positive numbers $a,b$ sufficiently small such that
$$
  \tilde{W}:=\mathcal{V}\times B_{E^2} (\xi_{\lam_0}(\mathcal{V}),b)\subset \mathcal{U},
  $$
where $\xi_{\lam_0}$ is the local center manifold mapping obtained in
Lemma 3.1, and $B_{E^2} (\xi_{\lam_0}(\mathcal{V}),b)$ means the
$b$-neighborhood of $\xi_0(\mathcal{V})$ in $E^2$.

Let $c=\min\limits_{x\in\partial \mathcal{V}}w(x)>0.$ Thanks to
\eqref{e3.9}, we have
\begin{align}\label{e3.10}
\nabla V(x)\cdot G_{\lam_0}(x)\leq -c, \Hs \forall x \in\partial
\mathcal{V},
\end{align}
where $\partial \mathcal{V}$ denotes the boundary of $\mathcal{V}$
in $E^1.$ Moreover, by the continuity of $G_\lambda$, there exists
$\varepsilon_1\in(0,\varepsilon_0]$ such that for each
$\lambda\in[\lam_0,\lam_0+\varepsilon_1]$,
\begin{align}\label{e3.11}
\nabla V(x)\cdot G_\lambda(x)\leq -\frac{c}{2}, \Hs \forall x
\in\partial \cV.
\end{align}
Note that $\xi_\lambda$ is continuous in $\lambda$. Then it can be
assumed that the $\varepsilon_1$ is sufficiently small such that
$$
  \xi_\lambda(\mathcal{V})\subset B_{E^2} (\xi_{\lam_0}(\mathcal{V}),b),\Hs
  \lambda\in[\lam_0,\lam_0+\varepsilon_1].
$$
Therefore
\be\label{e3.12}
 \mathcal{V}\times \xi_\lambda(\mathcal{V})\subset \tilde{W}\subset \mathcal{U},\Hs
  \lambda\in[\lam_0,\lam_0+\varepsilon_1].
\ee
From \eqref{e3.11}, one can deduce that $\mathcal{V}$ is an
absorbing set of $\phi_\lambda$. So $\phi_\lambda$ has an attractor
$\mathcal{A}_\lambda$ which is the maximal invariant set of
$\phi_\lambda$ in $\mathcal{V}$ for $\lambda\in[\lam_0,\lam_0+\varepsilon_1]$.
By the upper semicontinuity of attractors \cite{MW}, we have
$$
  \lim\limits_{\lambda\rightarrow
  \lam_0}\mathrm{dist}_H(\mathcal{A}_{\lambda},0)=0.
$$
Recalling $\mathbf{Re}\sigma_{\lambda}^1<0$ for
$\lambda\in(\lam_0,\lam_0+\varepsilon_1]$, we see that
$\mathcal{A}_0:=\{0\}$ is a repeller of $\phi_\lambda$. Thus we conclude from
Lemma \ref{l2.1} that $\mathcal{A}_\lambda$ has an
attractor-repeller pair $(\mathcal{K}_\lambda, \mathcal{A}_0)$ for
$\lambda\in(\lam_0,\lam_0+\varepsilon_1]$, where
$\mathcal{K}_\lambda$ is the maximal compact invariant set of
$\phi_\lambda$ in $\mathcal{A}_\lambda\setminus\{0\}.$ By the
maximality of $\mathcal{A}_\lambda$, we conclude that
$\mathcal{K}_\lambda$ is also the maximal compact invariant set of
$\phi_\lambda$ in $\mathcal{V}\setminus\{0\}$ for
$\lambda\in(\lam_0,\lam_0+\varepsilon_1]$.

Next we verify that
$\mathcal{K}_\lam$ has an invariant $(n-1)$-dimensional topological
sphere. To this end, we consider the inverse flow $\phi_\lambda^-$
of $\phi_\lambda$ on $U$ generated by the following system
\begin{align*}
w_t=-G_\lambda(w).
\end{align*}
Thus, we see that $\mathcal{A}_0$ becomes an attractor of
$\phi_\lambda^-$ for $\lambda \in(\lam_0,\lam_0+\varepsilon_1]$. Let
$\Sigma:=\Sigma(\cA_0)$ denote the attraction basin of $\cA_0$ with
respect to $\phi^-_\lam$. It is trivial to check that
$\partial\Sigma$ is invariant under $\phi_\lam^-$. Therefore
$\partial\Sigma$ is also an invariant set of $\phi_\lam$, which
implies $\partial\Sigma\subset \mathcal{K}_\lam$. Now we claim
$\Sigma$ is contractible, so $\partial\Sigma$ is an
$(n-1)$-dimensional topological sphere. Indeed, define
\begin{align*}
H(s,x)=\left\{\ba{ll} \phi_\lam^-(\frac{s}{1-s})x, \Hs \,\, s\in
[0,1), x
\in\Sigma; \\[1ex]
0, \Hs \Hs \Hs s=1,x \in\Sigma.\ea\right.
\end{align*}
Hence $H$ is a strong deformation retract which shrinks $\Sigma$ to
$0$. Finally, we define
$$
  \tilde{\mathcal{K}}_\lam=\{v+\xi_\lam(v):v\in \mathcal{K}_\lam\}, \Hs
  \tilde{S}=\{v+\xi_\lam(v):v\in \partial\Sigma\}.
  $$
Since $\mathcal{K}_\lam\subset \mathcal{V}$, and from \eqref{e3.12}
we infer that $\tilde{\mathcal{K}}_\lam\subset \tilde{W} \subset
\mathcal{U}$. One can easily see that $\tilde{\mathcal{K}}_\lam$ is
the maximal compact invariant set of  \eqref{e3.4} in $\tilde{W}\setminus\{0\}$.
Thus let
$$
  W=T^{-1}\tilde{W},\hs K_\lam=T^{-1}\tilde{\mathcal{K}}_\lam,\hs
  \mathbb{S}^{n-1}=T^{-1}\tilde{S}.
$$
Then $W,K_\lam,\mathbb{S}^{n-1}$ satisfy the
requirements of (2) in Theorem 3.1. The proof of Theorem 3.1 is complete.


\end{proof}

\setcounter {equation}{0}
\section{Example}

\hs\,\, In this section, we give an example to illustrate how to apply our abstract results to a concrete evolution equation.

\vs

Consider the initial value problem of the classical Swift-Hohenberg
equation as follows:
\begin{align}\label{e4.1}
\left\{\ba{ll} u_t=-(I+\Delta)^2 u+\lambda u- u^3 , \hs(x,t)\in
\Omega\times \mathbb{R}^+,\\[1ex]
u(0)=u_0, \ea\right.
\end{align}
where $u:\Omega\times \mathbb{R}^+\rightarrow \mathbb{R}$ is a
real-valued function, $\Omega=(0,\pi)\subset \mathbb{R}$, and
$\lambda\in \mathbb{R}$ is the bifurcation parameter.

\vs

\noindent $\textbf{Remark 5.1}$ Concerning this Swift-Hohenberg
equation, in fact, the authors in \cite{Y} has obtained an attractor
bifurcation result of system \eqref{e4.1} by giving some precise
estimates of solutions to prove that the trivial solution $u=0$ is
an attractor of the semiflow $\Phi_{\lambda}$ at the critical value
$\lambda=\lambda_0$. Generally speaking, it is not easy to check
that the trivial solution is an attractor of the system
$\Phi_{\lam_0}$. However in Theorem 3.2, we obtain a more general
attractor bifurcation result, which tells us that it suffices to
check that the trivial solution is isolated, and then we can obtain
the corresponding attractor bifurcation result.

\vs

In order to obtain the
corresponding attractor bifurcation result of \eqref{e4.1}, we calculate the local
center manifold (see e.g. \cite{C1}) of the trivial solution for the system at $\lam=\lam_0$ to check
that the trivial solution $u=0$ is isolated on the local center manifold.

For the mathematical setting, we consider the Hilbert space
$$
  H=\{\dot{L}^2(\Omega):u(x,t)=u(x+\pi,t)\},\hs H_1=
  \{\dot{H}^4(\Omega):u(x,t)=u(x+\pi,t)\},
  $$
where the dot $"\cdot"$ denotes $\int_0^\pi f\mathrm{d}x=0$ for
$f\in L^2$ or $H^4$, and equip $H$ with the usual inner product and
norm denoted by $(\cdot,\cdot),\|\cdot\|$, respectively.

Let $L_\lambda =A-B_\lambda,$ where $A=(I+\Delta)^2$ defined in
$H_1:=D(A)$, $B_\lambda=\lambda I$. Then $L_\lambda$ is a
sectorial operator. We see that the eigenvalues and the
corresponding eigenvectors of $L_\lambda$ are as follows:
\begin{align}\label{e4.2}
  \lambda_k=(1-4k^2)^2-\lambda, \hs
  e_{k1}(x)=\sqrt{\frac{2}{\pi}}\sin(2kx),\hs
  e_{k2}(x)=\sqrt{\frac{2}{\pi}}\cos(2kx)
\end{align}
for $k\geq 1$ associated with the periodic boundary condition:
$$
  u(x,t)=u(x+\pi,t).
 $$

Set $H_2=D(A^\frac{1}{2})$ and $g=-u^3, u\in H_2$. Then
$g:H_2\rightarrow H$ is a locally Lipschitz continuous mapping, and the system
\eqref{e4.1} can be rewritten as
\begin{align}\label{e4.3}
  u_t+L_\lambda u=g(u).
\end{align}
According to \cite{H,SY}, we deduce that for
each $u_0\in H_2$, the system
\eqref{e4.3} has a unique global strong solution $u(t)$ in $H_2$ with $u(0)=u_0$.

\vs

The system \eqref{e4.1} is a gradient system and one can check that
the system has a natural Lyapunov function $V(u),$
$$
  V(u)=\frac{1}{2}\int_\Omega|(I+\Delta)u|^2\mathrm{d}x-\int_\Omega
  F_\lambda(u)\mathrm{d}x, \Hs {\rm where}\hs
  F_\lambda(s)=\frac{\lambda}{2}s^2-\frac{1}{4}s^4.
  $$
Now we consider the case that $\lambda_0:=\lambda=9$. Then the first
eigenvalue of $L_\lambda$ is $\lambda_1=0$, which has the multiplicity
$2$ and its two eigenvectors are
$$
  q_1(x)=\sin 2x,\hs q_2(x)=\cos 2x.
  $$
Let $E_1$ be the eigenspace spanned by $q_1,q_2$, that is
$$
  E_1={\rm span}\{q_1,q_2\}
  $$
and $E_2=E^\bot_1$. Then $H=E_1\oplus E_2$. The projection $P:
H\rightarrow E_1$ is defined by
$$
  P(w_1+w_2)=(\tilde{w}_1+\tilde{w}_2)\sin2x+(\bar{w}_1+\bar{w}_2)\cos2x,
  $$
where
$$
  \tilde{w}_j=\frac{2}{\pi}\int_0^\pi w_j\sin2x \mathrm{d}x,\hs
  \bar{w}_j=\frac{2}{\pi}\int_0^\pi w_j\cos2x \mathrm{d}x, \Hs
  j=1,2.
  $$
Let $u=u_1+u_2, u_1\in E_1, u_2\in E_2$ and
$u_1=s_1\sin2x+s_2\cos2x, s_1,s_2\in \mathbb{R}$. Then we can rewrite
\eqref{e4.3} in the form
\begin{align}\label{e4.4}
  \left\{\ba{ll}\dot{u}_1
=&
  Pg(u_1+u_2),\\[1ex]
  \dot{u}_2
=&
  -(I-P)L_{\lambda_0} u_2+(I-P)g(u_1+u_2).\ea\right.
\end{align}
Denote $\Phi_\lambda$ the semiflow generated by \eqref{e4.1} and $\Phi^1_\lambda$ the restriction of
$\Phi_\lambda$ on $E_1$. Then we have the following result.

\begin{lem}
There exist positive constants $\beta,\varepsilon$ such that the
assumptions $(\mathbf{A}1)$-$(\mathbf{A}4)$ hold and when
$\lambda\in(\lambda_0-\varepsilon,\lambda_0)$, the trivial solution
$u=0$ of system \eqref{e4.1} is a global attractor of
$\Phi_\lambda$.
\end{lem}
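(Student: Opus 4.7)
The proof has two parts: (i) verifying the abstract spectral hypotheses $(\mathbf{A}1)$--$(\mathbf{A}4)$, and (ii) showing that $\{0\}$ is a global attractor of $\Phi_\lambda$ on a left neighborhood of $\lambda_0=9$. For part (i), I would exploit the explicit spectrum \eqref{e4.2} of $L_\lambda$: the eigenvalues are $\mu_k(\lambda)=(1-4k^2)^2-\lambda$, each of multiplicity two, with eigenvectors that do not depend on $\lambda$. At $\lambda_0=9$ only $\mu_1(9)=0$ is small, while $\mu_k(9)\geq 216$ for $k\geq 2$. Choosing, for instance, $\varepsilon=\beta=1$ gives $|\mu_1(\lambda)|\leq\beta$ and $\mu_k(\lambda)\geq 2\beta$ for all $k\geq 2$ and all $\lambda\in J_0=(8,10)$. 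Setting $\sigma^1_\lambda=\{\mu_1(\lambda)\}$, $\sigma^2_\lambda=\{\mu_k(\lambda):k\geq 2\}$, and $X^1_\lambda=E_1=\mathrm{span}\{q_1,q_2\}$ (independent of $\lambda$), $(\mathbf{A}1)$--$(\mathbf{A}2)$ follow immediately; $(\mathbf{A}3)$ is trivial since the projection $P^1_\lambda\equiv P$ does not depend on $\lambda$; and $(\mathbf{A}4)$ reduces to the fact that $\sigma^1_\lambda=\{9-\lambda\}$ has positive (resp.\ negative) real part for $\lambda<9$ (resp.\ $\lambda>9$).

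For part (ii), my main tool is the Lyapunov functional $V$ displayed just above the lemma, for which a direct computation gives $\tfrac{d}{dt}V(u(t))=-\|u_t\|^2\leq 0$ along solutions of \eqref{e4.1}. Using the Poincar\'e-type inequality $\|(I+\Delta)u\|^2\geq 9\|u\|^2$, which is immediate from the spectral expansion since $9$ is the smallest eigenvalue of $A=(I+\Delta)^2$ on the zero-mean periodic class, I would establish the coercivity estimate
\[
V(u)\;\geq\;\tfrac{1}{2}\|(I+\Delta)u\|^2-\tfrac{\lambda}{2}\|u\|^2\;\geq\;\tfrac{9-\lambda}{18}\|(I+\Delta)u\|^2\qquad(\lambda<9),
\]
so sublevel sets of $V$ are bounded in $H_2$. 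Together with the nonincrease of $V$ and the standard asymptotic compactness of semilinear parabolic semiflows with compact resolvent, this yields global existence and precompactness of every trajectory in $H_2$. Next, pairing the equilibrium equation $(I+\Delta)^2u=\lambda u-u^3$ with $u$ in $H$ yields $\|(I+\Delta)u\|^2+\|u\|_{L^4}^4=\lambda\|u\|^2\leq\tfrac{\lambda}{9}\|(I+\Delta)u\|^2$, which forces $u=0$ whenever $\lambda<9$. Since \eqref{e4.1} is a gradient system whose only equilibrium is $0$, LaSalle's invariance principle gives $\omega(u_0)=\{0\}$ for every $u_0\in H_2$.

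Finally, to upgrade pointwise attraction to attraction of bounded sets (a genuine global attractor in the sense of Definition~\ref{defn2.5}), I would multiply \eqref{e4.1} by $u$ in $H$ and use $\|(I+\Delta)u\|^2\geq 9\|u\|^2$ to get $\tfrac{d}{dt}\|u\|^2\leq -2(9-\lambda)\|u\|^2$, hence the uniform $L^2$ bound $\|u(t)\|\leq e^{-(9-\lambda)t}\|u_0\|$. The variation-of-parameters formula $u(t)=e^{-L_\lambda t}u_0+\int_0^t e^{-L_\lambda(t-s)}g(u(s))\,ds$, together with the analytic-semigroup smoothing estimate for $e^{-L_\lambda t}$ and the Sobolev embedding $H_2\hookrightarrow L^\infty$ in one space dimension (used to control $g(u)=-u^3$), then propagates this $L^2$-decay to $H_2$-decay, uniform on bounded sets of $H_2$. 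The main technical obstacle in my view is precisely this last step, and in particular keeping the constants uniform in $\lambda$ over the one-sided neighborhood; the spectral bookkeeping, the coercivity of $V$, and the uniqueness of the equilibrium are essentially elementary consequences of the eigenfunction expansion on $(0,\pi)$.
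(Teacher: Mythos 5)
Your proposal is correct, and it actually supplies the argument that the paper omits entirely: the published proof of this lemma consists of the single remark that the verification of $(\mathbf{A}1)$--$(\mathbf{A}4)$ is ``easy'' and that the global-attractor claim is ``standard,'' with a pointer to \cite{PR,Y}. Your spectral bookkeeping is right ($\mu_1(\lambda)=9-\lambda$ with the fixed two-dimensional eigenspace ${\rm span}\{\sin 2x,\cos 2x\}$, and $\mu_k(\lambda)\geq 215$ for $k\geq 2$, $\lambda\in(8,10)$, so $\beta=\varepsilon=1$ works), as are the energy identity $\frac{d}{dt}V(u(t))=-\|u_t\|^2$, the coercivity bound via $\|(I+\Delta)u\|^2\geq 9\|u\|^2$, and the integration-by-parts argument showing $u=0$ is the only equilibrium for $\lambda<9$. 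The one step you flag as delicate --- upgrading the $L^2$-decay to $H_2$-attraction of bounded sets uniformly in $\lambda$ --- is indeed where the work is, but note that your own coercivity estimate already resolves the potential circularity in the variation-of-constants bootstrap: since $V$ is nonincreasing and $V(u)\geq\frac{9-\lambda}{18}\|(I+\Delta)u\|^2$, every trajectory starting in a bounded set of $H_2$ stays in a fixed bounded set of $H_2$ for all time, so the cubic nonlinearity is controlled a priori and the smoothing estimate closes. An alternative, slightly slicker finish that avoids the bootstrap is to invoke the standard structure theorem for gradient systems with precompact orbits: the global attractor exists (point dissipativity plus asymptotic compactness, both of which you have) and equals the unstable set of the equilibria, hence equals $\{0\}$.
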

\begin{proof}
It is easy to see that there exist $\beta,\varepsilon>0$ such that assumptions
$(\mathbf{A}1)$-$(\mathbf{A}4)$ hold. The argument of the lemma is standard, we omit the details. One can also see \cite{PR,Y} for details.
\end{proof}


Now we state and prove our main results on attractor bifurcation of \eqref{e4.1}.
\begin{thm}
The trivial solution $u=0$ is isolated for the system $\Phi_{\lambda_0}$ generated by \eqref{e4.1} at $\lam=\lam_0$.
 Then there exist a
closed neighborhood $W$ of $0$ and a one-sided neighborhood
$I^+=(\lambda_0,\lambda_0+\varepsilon_1]$ such that for each $\lam\in
I^+$, $\Phi_\lam$ has an attractor $K_\lam$ with $ K_\lam$ in
$W\setminus \{0\}$ and
$$
  \lim\limits_{\lambda\rightarrow
  \lambda_0}\mathrm{dist}_H(K_{\lambda},0)=0.
$$
Furthermore, $K_\lam$ contains an invariant topological sphere
$\mathbb{S}^{n-1}$.
\end{thm}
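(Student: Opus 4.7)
The plan has two stages. Stage one is to verify the isolation hypothesis of Theorem 3.2, namely that $\cA_0=\{0\}$ is an isolated invariant set of $\Phi_{\lam_0}$ at $\lam_0=9$. Stage two is then a direct appeal to Theorem 3.2, relying on the verifications already recorded in Lemma 4.1 (conditions $(\mathbf{A}1)$--$(\mathbf{A}4)$ and $\{0\}$ being the global attractor of $\Phi_\lam$ for each $\lam<\lam_0$).

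For stage one, I would combine the center-manifold reduction of Lemma 3.1 with the isolation equivalence of Lemma 3.2: it suffices to show that $0$ is isolated for the reduced flow $\phi_{\lam_0}$ on the two-dimensional center manifold $\cM_{\lam_0}$. Writing $v=s_1\sin 2x+s_2\cos 2x\in E_1$ and using $L_{\lam_0}|_{E_1}=0$, the reduced ODE is
\begin{align*}
\dot v=G_{\lam_0}(v)=Pg(v+\xi_{\lam_0}(v)),
\end{align*}
with $g(u)=-u^3$ and $\xi_{\lam_0}(0)=D\xi_{\lam_0}(0)=0$, so that $g(v+\xi_{\lam_0}(v))=-v^3+O(\|v\|^4)$. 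Expanding $v^3$ by product-to-sum identities, the $\sin 6x,\cos 6x$ contributions lie in $E_2$ and are projected out, while the $E_1$-component works out to $\tfrac{3}{4}(s_1^2+s_2^2)(s_1\sin 2x+s_2\cos 2x)$. Hence
\begin{align*}
\dot s_j=-\tfrac{3}{4}(s_1^2+s_2^2)s_j+O((s_1^2+s_2^2)^2),\hs j=1,2.
\end{align*}
Taking $V(s_1,s_2)=\tfrac{1}{2}(s_1^2+s_2^2)$ as a Lyapunov function yields $\dot V=-\tfrac{3}{4}(s_1^2+s_2^2)^2+\text{h.o.t.}<0$ on a punctured neighborhood of $0$, so $0$ is asymptotically stable, hence isolated, for $\phi_{\lam_0}$ on $\cM_{\lam_0}$. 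By Lemma 3.2, $0$ is also an isolated invariant set for $\Phi_{\lam_0}$.

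For stage two, isolation together with Lemma 4.1 verifies all hypotheses of Theorem 3.2, which then yields $\ve_1>0$, a closed neighborhood $W$ of $0$, and an attractor $K_\lam\subset W\setminus\{0\}$ of $\Phi_\lam$ for every $\lam\in(\lam_0,\lam_0+\ve_1]$ with $\mathrm{dist}_H(K_\lam,0)\to 0$ as $\lam\to\lam_0^+$ and containing an invariant $(n-1)=1$-dimensional topological sphere $\mathbb{S}^1$.

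The main obstacle is the explicit trigonometric computation of the $E_1$-projection of $v^3$: the coefficient $-\tfrac{3}{4}$ must be obtained correctly, because only its negativity ensures that the higher-order corrections from $\xi_{\lam_0}$ and from the $O(\|v\|^4)$ remainder in $g$ cannot disturb the Lyapunov argument near $0$. Beyond this bookkeeping the proof is mechanical, and in particular the exact form of $\xi_{\lam_0}$ is immaterial since it only enters at order higher than the cubic leading term.
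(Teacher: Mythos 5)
Your proposal is correct and follows the same overall strategy as the paper: reduce to the two-dimensional center manifold via Lemmas 3.1--3.2, compute the $E_1$-projection of the cubic nonlinearity, verify that $0$ is isolated for the reduced flow, and then invoke Theorem 3.2 together with Lemma 4.1. The one genuine difference is where you stop the computation. The paper goes on to compute a third-order approximation $\psi$ of the center-manifold map $h$ (the coefficients $\alpha_1,\dots,\alpha_4=\pm\tfrac{1}{2432},\pm\tfrac{3}{2432}$ via Carr's Theorem 10) and writes out the reduced equations through fifth order before concluding isolation. You observe, correctly, that this is unnecessary: since $h(0)=Dh(0)=0$, the reduced equation is $\dot s_j=-\tfrac34(s_1^2+s_2^2)s_j+o(|s|^3)$, and the radial Lyapunov function $V=\tfrac12(s_1^2+s_2^2)$ gives $\dot V=-\tfrac34|s|^4+o(|s|^4)<0$ on a punctured neighborhood, so $0$ is asymptotically stable and hence an isolated invariant set regardless of the explicit form of $h$. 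Your computation of the projected cubic term ($P(-u_1^3)=-\tfrac34(s_1^2+s_2^2)u_1$) agrees with the paper's \eqref{e4.8}. If anything, your Lyapunov argument is the cleaner justification of the step the paper states somewhat tersely ("from which one can conclude that the trivial solution is an isolated equilibrium"), since what Theorem 3.2 actually requires is isolation as an invariant set, not merely as an equilibrium, and asymptotic stability delivers exactly that.
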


\begin{proof}

According to Lemma 3.1, we deduce that there exists a neighborhood
$U_1\subset E_1$ of $0$ such that the system \eqref{e4.4} has a
center manifold mapping $u_2=h(u_1):U_1\rightarrow E_2$ with
$h(0)=h'(0)=0$. Thus the equation which determines the asymptotic
behavior of solutions of \eqref{e4.4} is the following two-dimensional
equation \be\label{e4.6} \dot{u}_1 =
  Pg(u_1+h(u_1)),
\ee where $u_1=s_1\sin2x+s_2\cos2x, s_1,s_2\in \mathbb{R}$.
 By Theorem 3.2, one concludes that it suffices to check
that the trivial solution $u_1=0$ of system \eqref{e4.6} is isolated
for the semiflow $\phi_{\lambda_0}$ generated by \eqref{e4.6}.

In what follows
we calculate the local center manifold (see e.g. \cite{C1}) of $u_1=0$ for
$\phi_{\lambda_0}$ in order to show that the trivial solution $u=0$
is isolated.  Note that \be\label{e4.7}
  u_1^3=s_1^3\sin^32x
 +
  s_2^3\cos^32x
 +
  3s_1^2s_2\sin^22x\cos2x
 +
  3s_1s_2^2\sin2x\cos^22x.
\ee So by the definition of $P$, we obtain \be\label{e4.8}
  Pg(u_1)=-Pu_1^3
 =
  -\frac{3}{4}s_1^3\sin2x
 -
  \frac{3}{4}s_2^3\cos2x
 -
  \frac{3}{4}s_1^2s_2\cos2x
 -
  \frac{3}{4}s_1s_2^2\sin2x.
\ee In order to calculate an approximation to $h(u_1),$ we set
\be\label{5.9}
  (\mathcal{M}_1(\psi))(u_1)
 =
  \psi'(u_1)Pg(u_1+\psi(u_1))
 +
  (I-P)L_{\lambda_0}\psi
 -
  (I-P)g(u_1+\psi(u_1)),
\ee where $\psi:E_1\rightarrow E_2$. To apply Theorem 10 in
\cite{C1}, we choose $\psi$ so that
$(\mathcal{M}_1(\psi))(u_1)=\mathrm{O}(u_1^5)$. If
$\psi(u_1)=\mathrm{O}(u_1^3)$, then
$$
  Pg(u_1+\psi(u_1))=Pg(u_1)+\mathrm{O}(u_1^5)
$$
and \be\label{e4.10}
  (\mathcal{M}_1(\psi))(u_1)
 =(I-P)L_{\lambda_0}\psi
 -
  (I-P)g(u_1+\psi(u_1))+\mathrm{O}(u_1^5).
 \ee
It follows from \eqref{e4.7}, \eqref{e4.8} that
\begin{align}\label{e4.11}
 &\hs-(I-P)g(u_1+\psi(u_1))
 =
  (I-P)u_1^3+\mathrm{O}(u_1^5)\nonumber\\
&=
  (-\frac{1}{2}\sin2x\cos4x-\frac{1}{4}\sin2x)s_1^3
 +
  (\frac{1}{2}\cos2x\cos4x-\frac{1}{4}\cos2x)s_2^3\nonumber\\
&+
  (-\frac{3}{2}\cos2x\cos4x+\frac{3}{4}\cos2x)s_1^2s_2
 +
  (\frac{3}{2}\sin2x\cos4x+\frac{3}{4}\sin2x)s_1s_2^2
 + \mathrm{O}(u_1^5).
\end{align}
Let
\begin{align*}
  \psi
 &=
  (\alpha_1\sin2x\cos4x)s_1^3
 +
  (\alpha_2\cos2x\cos4x)s_2^3
 +
  (\alpha_3\cos2x\cos4x)s_1^2s_2\nonumber\\
 &\hs +
  (\alpha_4\sin2x\cos4x)s_1s_2^2.
\end{align*}
By some elementary computations, one can check
$PL_{\lambda_0}\psi=0,$ and so
\begin{align}\label{e4.12}
&\hs (I-P)L_{\lambda_0}\psi
 =
  L_{\lambda_0}\psi\nonumber\\
&=
  (608\alpha_1\sin2x\cos4x+608\alpha_1\cos2x\sin4x)s_1^3
+
  (608\alpha_2\cos2x\cos4x)s_2^3\nonumber\\
&+
  (-608\alpha_2\sin2x\sin4x)s_2^3
+
  (608\alpha_3\cos2x\cos4x-608\alpha_3\sin2x\sin4x)s_1^2s_2\nonumber\\
&+
  (608\alpha_4\sin2x\cos4x+608\alpha_4\cos2x\sin4x)s_1s_2^2.
\end{align}
Then we conclude from \eqref{e4.10}-\eqref{e4.12} that
\begin{align}\label{5.13}
&\hs (\mathcal{M}_1(\psi))(u_1) =L_{\lambda_0}\psi
 -
  (I-P)g(u_1+\psi(u_1))+\mathrm{O}(u_1^5)\nonumber\\
&=(608\alpha_1\sin2x\cos4x+608\alpha_1\cos2x\sin4x)s_1^3
 +
  (608\alpha_2\cos2x\cos4x)s_2^3\nonumber\\
&+
  (-608\alpha_2\sin2x\sin4x)s_2^3
 +
  (608\alpha_3\cos2x\cos4x-608\alpha_3\sin2x\sin4x)s_1^2s_2\nonumber\\
&+
  (608\alpha_4\sin2x\cos4x+608\alpha_4\cos2x\sin4x)s_1s_2^2\nonumber\\
&+
   (-\frac{1}{2}\sin2x\cos4x-\frac{1}{4}\sin2x)s_1^3
 +
  (\frac{1}{2}\cos2x\cos4x-\frac{1}{4}\cos2x)s_2^3\nonumber\\
&+
  (-\frac{3}{2}\cos2x\cos4x+\frac{3}{4}\cos2x)s_1^2s_2
 +
  (\frac{3}{2}\sin2x\cos4x+\frac{3}{4}\sin2x)s_1s_2^2
 + \mathrm{O}(u_1^5).
\end{align}
Therefore, if
$$
  \alpha_1=\frac{1}{2432},\hs \alpha_2=-\frac{1}{2432},\hs
  \alpha_3=\frac{3}{2432},\hs \alpha_4=-\frac{3}{2432}.
$$
Then
\begin{align*}
  \psi
 &=
  (\frac{1}{2432}\sin2x\cos4x)s_1^3
 -
  (\frac{1}{2432}\cos2x\cos4x)s_2^3
 +
  (\frac{3}{2432}\cos2x\cos4x)s_1^2s_2\nonumber\\
 &\hs -
  (\frac{3}{2432}\sin2x\cos4x)s_1s_2^2,
\end{align*}
and
$$
  (\mathcal{M}_1(\psi))(u_1)=\mathrm{O}(u_1^5).
$$
By Theorem 10 in \cite{C1}, we have
\begin{align}\label{e4.14}
  h(u_1)
&=
  \psi(u_1)+\mathrm{O}(u_1^5)\nonumber\\
&=
  (\frac{1}{2432}\sin2x\cos4x)s_1^3
 -
  (\frac{1}{2432}\cos2x\cos4x)s_2^3\nonumber\\
 &\hs+
  (\frac{3}{2432}\cos2x\cos4x)s_1^2s_2
 -
  (\frac{3}{2432}\sin2x\cos4x)s_1s_2^2+\mathrm{O}(u_1^5).
\end{align}
Substituting \eqref{e4.14} into \eqref{e4.6}, we obtain
\begin{align*}
\dot{s}_1
&=-
   \frac{3}{4}s_1^3
  -
   \frac{3}{4}s_1s_2^2
 +
  \frac{3}{4864}s_1^5
 -
   \frac{9}{4864}s_1^3s_2^2
 +
  \mathrm{O}(u_1^7),\\
\dot{s}_2
&=
  -\frac{3}{4}s_2^3
 -
  \frac{3}{4}s_1^2s_2
 +
  \frac{3}{4864}s_2^5
 -
  \frac{9}{4864}s_1^2s_2^3
 +
  \mathrm{O}(u_1^7),
\end{align*}
from which one can conclude that the trivial solution $u_1=0$ is an
isolated equilibrium. Hence the trivial
solution $u=0$ of system \eqref{e4.1} is isolated, see e.g.\cite{BB}.

According to Theorem 3.2, there exist a closed
neighborhood $W$ and $\varepsilon_1>0$ such that the system
\eqref{e4.1} bifurcates from $(0,\lambda_0)$ an attractor
$K_\lambda$ for $\lambda\in (\lambda_0,\lambda_0+\varepsilon_1]$,
where $K_\lambda$ is the maximal compact invariant set of
$\Phi_\lambda$ in $W\setminus\{0\}$ with
$$
  \lim\limits_{\lambda\rightarrow
  \lambda_0}\mathrm{dist}_H(K_{\lambda},0)=0.
$$
Furthermore, $K_\lam$ contains an invariant topological sphere
$\mathbb{S}^{n-1}$. The proof of Theorem 4.1 is complete.
\end{proof}

\newpage
\centerline{\large\bf{ Appendix A:} Isomorphisms Induced by Projections}

\Vs\Vs Let $X_\lam^i,$ $P_\lam^i$ be the same as in Section 3.1.
Since $P^2_\lam=I-P^1_\lam$, the continuity of $P^1_\lam$ implies
that $P_\lam^2$ is continuous in $\lam$ as well.

By $(\mathbf{A}3)$ we can assume $J_0$ is chosen sufficiently small
so that
$$
||P_\lam^i-P_{\lam_0}^i||\leq c<1,\Hs \A\,\lam\in
J_0,\,\,i=1,2.\eqno(A.1)
$$

As before,  we drop the subscript ``$\lam_0$'' and rewrite
$$X^i=X_{\lam_0}^i,\hs  P^i=P_{\lam_0}^i.$$

\noindent{\bf Proposition A1.}\, {\em  For each $i=1,2$, the
restriction  $P^i|_{X^i_\lam}$ of $P^i$ on $X_\lam^i$ is an
isomorphism between $X^i_\lam$ and $X^i$. } \Vs \noindent{\bf
Proof.} To prove Proposition A1,  let us first  verify  that
$P^i|_{X^i_\lam}$ are  one-to-one mappings.

As $P_\lam^2=I-P_\lam^{1}$,  we deduce that
$$
||P_\lam^{2}-P^{2}||=||P_\lam^{1}-P^{1}||\leq c<1.\eqno(A.2)
$$
In what follows  we argue by contradiction and suppose
$P^i|_{X^i_\lam}$ fails to be a  one-to-one mapping for some $i$.
Then there would exist $x_i\in X^i_\lam$ with $x_i\ne0$ such that
$P^i x_i=0$. Further   by (A.1) and (A.2) we see that
$$
||x_i||=||P_\lam^ix_i||=||P_\lam^ix_i-P^ix_i||\leq
c||x_i||<||x_i||,$$ a contradiction\,!

Now we show that $P^i|_{X^i_\lam}$ are isomorphisms. Since
$P^i|_{X^i_\lam}$ are  one-to-one mappings, one immediately
concludes that $P^1|_{X^1_\lam}$ is an isomorphism. So we only need
to consider the case $i=2$.

Let $Q=P^2+P_\lam^{1}$. Then
$$Q|_{X^2_\lam}=P^2|_{X^2_\lam}+P_\lam^{1}|_{X^2_\lam}=P^2|_{X^2_\lam}\,.$$
Because
$$Q=(I-P^{1})+P_\lam^{1}=I-(P^{1}-P_\lam^{1}),
$$
and  $||P^{1}-P_\lam^{1}||<1$, by the basic knowledge in linear
functional analysis, we know that $Q:X\ra X$ is an isomorphism. To
show that $P^2|_{X^2_\lam}$ is an isomorphism, there remains to
check that $Q {X^2_\lam}=X^2$. For this purpose, it suffices to show
that $Q^{-1}X^2\subset X^2_\lam$.

We argue by contradiction and suppose the contrary. There would
exist $u\not\in X^2_\lam$ such that $Qu\in X^2$. Let
$u=x_\lam+x^2_\lam$, where $x_\lam\in X_\lam^{1}$, and $x_\lam^2\in
X_\lam^2$. Then $x_\lam\ne 0$. We observe that
$$
Qu=(P^2+P_\lam^{1})u=P^2u+P_\lam^{1}(x_\lam+x^2_\lam)=x_\lam+P^2u\in
X^2.
$$
Hence $x_\lam\in X^2$. Thereby we have $x_\lam\in X_\lam^{1}\cap
X^2$.
 It follows that $$P_\lam^{1}x_\lam=x_\lam,\hs P^{1}x_\lam=0.$$
 Thus
 $$
 ||x_\lam||=||P_\lam^{1}x_\lam- P^{1}x_\lam||\leq c||x_\lam||<||x_\lam||.
 $$
 This leads to a contradiction and completes the proof of the proposition. $\Box$

\Vs
Now we define for each $\lam\in J_0$ a linear  operators $T_\lam$ on $X$ as follows:
$$
T_\lam u=\Sig_{1\leq j\leq 2}(P^j|_{X_\lam^j}P^j_\lam )\,u,\Hs u\in
X.
$$
It is trivial to check that $T_\lam$ is an isomorphism with $T_{\lam_0}=I$. Clearly $T_\lam$ is continuous in $\lam$, and
$$
    T_\lam X^i_\lam=\Sig_{1\leq j\leq 2}(P^j|_{X_\lam^j}P^j_\lam)\, X^i_\lam=P^i|_{X_\lam^i} X^i_\lam= X^i,\Hs i=1,2.
$$
Thus we have the following conclusion.
\Vs
\noindent{\bf Proposition A2.} {\em Under the
assumptions $(\mathbf{A}1)$-$(\mathbf{A}3)$, there exists  a  family
of isomorphisms $T_\lam$ {\rm ($\lam\in J_0$)} on $X$  depending
continuously on $\lam$ with $T_{\lam_0}=I$, such that $$
    T_\lam X^i_\lam=X^i_{\lam_0}:=X^i,\Hs i=1,2.\eqno(A3)
$$
}

\baselineskip 15pt

\medskip
\medskip


\begin{thebibliography}{10}

\bibitem{BB} C. Bardos, D. Bessis,
Bifurcation Phenomena in Mathematical Physics and Related Topics,
Proceedings of the NATO Advanced Study Institute held at Carg$\grave{{\rm e}}$se,
Corsica, France, June 24-July 7, 1979, Springer Science and Business Media, 2012.

\bibitem{BLL} J. Bai, D. S. Li, C. Q. Li,
A note on multiplicity of solutions near resonance of semilinear elliptic equations,  Comm. Pure Appl. Anal., 18(6) (2019), 3351-3365.

\bibitem{CLR} T. Caraballo, J. A. Langa, J. C. Robinson, A stochastic pitchfork bifurcation
in a reaction-diffusion equation, Proc. R. Soc. Lond. A, 457 (2001), 2041-2061.
\bibitem{CLR1} A. Carvalho, J. A. Lange, J. Robinson, Attractors for Infinite-dimensional Nonautonomous Dynamical Systems, Springer, New York, 2013.

\bibitem{C} C. Conley,
Isolated Invariant Sets and the Morse Index, Regional Conference
Series in Mathematics 38, Amer. Math. Soc., Providence RI, 1978.

\bibitem{C1} J. Carr,
Applications of Centre Manifold Theory. AMS, 35, Springer Verlag,
Berlin, 1981.

\bibitem{CH} S. N. Chow, J. K. Hale, Methods of Bifurcation Theory.
Springer-Verlag, New York-Berlin-Heidelberg, 1982.

\bibitem{H} D. Henry,
Geometric Theory of Semilinear Parabolic Equations,
Lect. Notes in Math. 840, Springer-Verlag, Berlin, 1981.

\bibitem{H1} J. K. Hale,
Asymptotic Behavior of Dissipative Systems, Mathematical Surveys
Monographs 25, AMS Providence, RI, 1988.

\bibitem{H2} J. K. Hale, H. Kocak,
Dynamics and Bifurcations, Springer-Verlag, Texts in Applied
Mathematics, 1991.

\bibitem{HMW} C. H. Hsia, T. Ma, S. Wang, Attractor bifurcation of
three-dimensional double-diffusive convection,  Zeitschrift f\"ur
Analysis and ihre  Anwendungen,  27 (2008), 233-252.

\bibitem{K} H. Kielh\"ofer, Bifurcation Theory: An Introduction with Applications
to PDEs, Springer-Verlag, New York 2006.

\bibitem{LLZ} C. Q. Li, D. S. Li, Z. J. Zhang, Dynamic bifurcation from infinity of nonlinear evolution equations, SIAM J. Appl. Dyn. Syst., 16 (2017), 1831-1868.

\bibitem{L} D. S. Li, Y. Wang,
Smooth Morse-Lyapunov funtions of strong attractors for differential
inclusions, SIAM J, Cont. Optim., 50 (2012), 368-387.

\bibitem{LW} D. S. Li, Z. Q. Wang,
Local and global dynamic bifurcations of nonlinear evolution equations,
Indiana Univ. Math. J., 67 (2018), 583-621.

\bibitem{LWZH} D. Luo, X. Wang, D. Zhu, M. Han,
Bifurcation Theory and Methods of Dynamical Systems, World
Scientific, 15, 1997.

\bibitem {Mis} K. Mischaikow, M. Mrozek, Conley index, in Handb. Dyn. Syst. 2, Elsevier, New York, 2002, 393-460.

\bibitem{MM} J. E. Marsden, M. McCracken, The Hopf bifurcation and Its
Applications, Springer-Verlag, New York, 1976.

\bibitem{MW} T. Ma, S. H. Wang,
Stability and Bifurcation of Nonlinear Evolution Equations, Science
Press, Beijing, 2007.

\bibitem{MW1} T. Ma, S. H. Wang,
Bifurcation Theory and Applications, World Sci. Ser.
Nonlinear Sci. Ser. A, 53, World Scientific, Hackensack, NJ, 2005.

\bibitem{MW2} T. Ma, S. H. Wang,
Dynamic bifurcation and stability in the Rayleigh-B\'enard convection,
Communications in Mathematical Sciences, 2(2) (2004), 159-183.

\bibitem{MW3} T. Ma, S. H. Wang,
Phase Transition Dynamics. Springer, New York, 2013.

\bibitem{PR} L. A. Peletier, V. Rottsch\"afer,
Pattern selection of solutions of the Swift-Hohenberg equation,
Physica D, 194 (2004), 95-126.

\bibitem{R} K. P. Rybakowski,
The Homotopy Index and Partial Differential Equations,
Springer-Verlag, Berlin, 1987.

\bibitem{S} Jos\'e M. R. Sanjurjo,
Global topological properties of the Hopf bifurcation, J.
Differential Equations, 243 (2007), 238-255.

\bibitem{SY} G. R. Sell, Y. You,
Dynamics of Evolutionary Equations, Springer, New York, 2002.

\bibitem{W} J. R. Ward, Bifurcating continua in infinite dimensional
dynamical systems and applications to differential equations, J.
Differential Equations, 125 (1996), 117-132.

\bibitem{Y} M. Yari,
Attractor bifurcation and final patterns of the $n$-dimensional and
generalized Swift-Hohenberg equations, Discrete Cont. Dyn. Syst. -B, 7
(2007), 441-456.








\end{thebibliography}
\end{document}